\definecolor{linkcolor}{rgb}{0.5,0.0,0.0}
\definecolor{citecolor}{rgb}{0.0,0.5,0.0}
\definecolor{urlcolor} {rgb}{0.0,0.0,0.5}
\title[Superintegrable systems in the Euclidean plane]
	{An algebraic geometric classification of superintegrable systems in the Euclidean plane}
\subjclass[2010]{
	Primary
	14H70;  
	Secondary
	70H06,  
	70H33.  
}
\author{Jonathan Kress}
\email{j.kress@unsw.edu.au}
\address{
	School of Mathematics and Statistics \\
	The University of New South Wales \\
	Sydney 2052 \\
	Australia
}
\author{Konrad Schöbel}
\email{konrad.schoebel@uni-jena.de}
\address{
	Institut für Mathematik \\
	Fakultät für Mathematik und Informatik \\
	Friedrich-Schiller-Universität Jena \\
	07737 Jena \\
	Germany
}
\numberwithin{equation}{section}
\newtheorem{theorem}{Theorem}[section]
\newtheorem{proposition}[theorem]{Proposition}
\newtheorem{lemma}[theorem]{Lemma}
\newtheorem{corollary}[theorem]{Corollary}
\newtheorem*{conjecture}{Conjecture}
\theoremstyle{definition}
\newtheorem{definition}[theorem]{Definition}
\theoremstyle{remark}
\newtheorem{remark}[theorem]{Remark}
\setlist[enumerate,1]{label=(\roman*)}
\newcommand{\R}{\mathbb R}
\newcommand{\C}{\mathbb C}
\renewcommand{\P}{\mathbb P}
\renewcommand{\le}{\leqslant}
\newcommand{\coloneq}{:=}
\newcommand{\eqcolon}{=:}
\DeclareMathOperator{\tr}{tr}
\begin{document}

\begin{abstract}
	We prove that the set of non-degenerate second order maximally
	superintegrable systems in the complex Euclidean plane carries a natural
	structure of a projective variety, equipped with a linear isometry group
	action.  This is done by deriving the corresponding system of homogeneous
	algebraic equations.  We then solve these equations explicitly and give a
	detailed analysis of the algebraic geometric structure of the
	corresponding projective variety.  This naturally associates a unique
	planar line triple arrangement to every superintegrable system, providing
	a geometric realisation of this variety and an intrinsic labelling scheme.
	In particular, our results confirm the known classification by
	independent, purely algebraic means.
\end{abstract}

\maketitle

\tableofcontents

\section{Introduction}

\subsection{Superintegrable systems}

In classical and quantum mechanics exact solutions to the equations of motion play a central role, providing models with which to explore the properties of such systems and as a basis for perturbations.  Two systems in particular stand out, the harmonic oscillator, which represents the lowest order term in the Taylor expansion of any non-singular potential, and the Kepler-Coulomb potential of the classical celestial motion and the quantum Hydrogen atom.

Finding exact solutions to PDEs generally requires the use of symmetry methods.  In classical mechanics, continuous symmetries describe flows in the phase space that provide conserved quantities.  In quantum mechanics, differential operators commuting with the Hamiltonian preserve its energy eigenspaces.  Superintegrable systems are those systems with the greatest number of independent symmetries and the harmonic oscillator and Kepler-Coulomb system are the best known examples.  For a natural Hamiltonian system on an $n$-dimensional manifold, this maximum number is $2n-1$.

In the classical case, superintegrability confines the trajectories of the system to the level sets of $2n-1$ conserved quantities in the $2n$-dimensional phase space and hence to one-dimensional orbits which necessarily close if they are confined.  In the quantum case, maximal superintegrability is associated with quasi-exact solvability and in many cases, the energy spectrum has been determined from the symmetry algebra alone.

Not only are the harmonic oscillator and Kepler-Coulomb systems superintegrable, but their superintegrability is due to symmetries that are quadratic in the momenta, or second order differential operators in the quantum case.  Second order symmetries allow the equations of motion to be solved by separation of variables and (non-degenerate) second order superintegrable systems in two and three dimensions allow separation in more than one distinct system of coordinates \cite{Kalnins&Kress&Pogosyan&Miller,Kalnins&Kress&MillerIV}.

The separated solutions of the harmonic oscillator and Kepler-Coulomb system are given in terms of an orthonormal basis of special functions.  When separation occurs in more than one system, the interbasis expansion coefficients are also given in terms of special functions in the form of orthogonal polynomials.  These connections with orthogonal polynomials have been exploited by Post {\em et al}\ to ``explain'' the Askey-Wilson scheme of hypergeometric orthogonal polynomials with Wigner-İnönü like contractions between second order superintegrable systems providing the degenerations between the classes of orthogonal polynomials \cite{Kalnins&Miller&Post}.  It is this observation that motivates the current work that seeks to describe second order superintegrable systems as a projective variety.

Recently, there has been rapid growth in the number of known families of superintegrable systems, in particular those with higher order symmetries.  For an overview see the topical review of Miller, Post and Winternitz \cite{Miller&Post&Winternitz}.  However, many questions remain open and even the complete classification for non-degenerate second order superintegrability in constant curvature and conformally flat spaces is only known for two and three dimensions \cite{Kalnins&Kress&MillerI,Capel&Kress}.  This paper aims to develop new techniques that will provide a classification in any dimension.

\subsection{Motivation}

Our approach is strongly motivated by recent results on separable systems.
While the classification of separable systems on constant scalar curvature
manifolds has been known for over 30~years \cite{Kalnins&Miller,Kalnins}, a
recent algebraic geometric approach developed by the second
author~\cite{Schoebel12,Schoebel14,Schoebel15,Schoebel16} revealed a deep
algebraic and geometric structure underlying this classification.  Together
with A.~P.~Veselov he proved that the space of all separable systems on an
$n$-dimensional sphere (in normal form) carries a natural structure of a
projective variety, isomorphic to the real part~$\bar{\mathscr M}_{0,n+2}(\R)$
of the \emph{Deligne-Mumford moduli space}~$\bar{\mathscr M}_{0,n+2}$ of
stable algebraic curves of genus zero with~$n+2$ marked
points~\cite{Schoebel&Veselov}.  Exploiting the structure of these moduli
spaces led to a topological classification of separable systems on spheres by
\emph{Stasheff polytopes} and to a simple explicit construction based on a
natural \emph{operad structure} on the sequence of moduli
spaces~$\bar{\mathscr M}_{0,n}(\R)$.

We regard the present work as a proof of concept that the same ideas apply to
the classification of superintegrable systems, initiating an
``algebro-geometrisation'' of the classification of superintegrable systems
and its applications.  Indeed, leading experts in superintegrability consider
algebraic geometric methods the most promising route to mayor advances in this
field \cite{Miller&Post&Winternitz}:
\begin{quote}
	``The possibility of using methods of algebraic geometry to classify
	superintegrable systems is very promising and suggests a method to extend
	the analysis in arbitrary dimension as well as a way to understand the
	geometry underpinning superintegrable systems.''
\end{quote}
Even though, a concrete and detailed concept how to achieve this goal has
never been proposed.  This gap is filled with our proof of concept, which can
be better extended to higher dimensions and more general context than the
classical approach.

In the case of separable systems, a thorough analysis of the least non-trivial
example, the $3$-sphere, provided enough information for a generalisation to
arbitrary dimensions.  Since separable systems are closely related to
superintegrable systems, this suggests that an algebraic geometric description
of superintegrable systems in low dimensions will reveal sufficient additional
structure to push the classification further to higher dimensions, a task
which currently seems intractable by standard methods.  The present article is
a first step in this direction.

\subsection{Results}

Traditionally, classifying superintegrable systems of a certain type always
meant to give lists of normal forms for the equivalence classes of such
systems under isometries.  This sense of the word ``classification'' ignores
the fact that the set of superintegrable systems has a topological structure
and possibly an even more fine-grained geometric structure.  The first main
result we prove in this article is that the set of non-degenerate second order
maximally superintegrable systems in the Euclidean plane has the structure of
(a linear bundle over) a projective variety, equipped with a linear isometry
group action.  We call this variety the \emph{variety of superintegrable
systems}.

In other words, the natural category in which to consider the classification
problem for superintegrable systems is the category of projective varieties
equipped with linear group actions.  This indicates that the classification
problem can be better treated by algebraic geometric means, studying the
underlying algebraic equations instead of partial differential equations.  The
remaining part of this article is dedicated to substantiate this claim by
showing that a consequent algebraic geometric treatment not only simplifies
the classification problem considerably, but also reveals a deep and
previously hidden geometric structure.  We show, for instance, that
non-degenerate superintegrable systems on the plane are (essentially)
parametrised by a completely reducible ternary cubic.  This associates a
planar arrangement of projective line triples to each superintegrable system
in the plane, which supports the singular locus of the corresponding
superintegrable potential.

In this sense our work not only confirms the known classification by
independent means, but also enriches it with additional algebraic and
geometric structure.

\subsection{Comparison to prior work}

It should be noted that the idea to use varieties in order to classify
superintegrable systems is not new and appeared in earlier works of the first
author, along with Kalnins {\em et al}
\cite{Kalnins&Kress&Miller,Kalnins&Kress&Miller3D} and Capel
\cite{Capel&Kress}, where inhomogeneous polynomial integrability conditions
were used to classify superintegrable systems in two and three dimensions.
Those varieties depend on a non-canonical choice of some generic base point in
the manifold and carry an intricate non-linear isometry group action.  This
makes the use of computer algebra inevitable and is the principal obstruction
for a generalisation to higher dimensions.  We remark that the algebraic
geometric structure of these varieties has never been exploited for the
classification.

While we make use of the same polynomial integrability conditions, our
approach is fundamentally different in that here we use algebraic geometry
right from the beginning to \emph{define the setting} and not merely as a
\emph{tool to analyse} these conditions.  This has a number of advantages.
First, our variety does not depend on any additional choices and it carries a
linear isometry group action.  Moreover, although the polynomial integrability
conditions are inhomogeneous, it turns out to be a projective variety.
Second, our variety is simple enough to be analysed without the help of
computer algebra.  Third, the line triple arrangements and the multiplicities
of the ternary cubic mentioned above provide intrinsic geometric and algebraic
labelling schemes for superintegrable systems respectively their isometry
classes, in contrast to the known classification scheme where isometry classes
were labelled arbitrarily.  Note that the more structure on the set of
superintegrable systems that is revealed, the more likely it is to find patterns that
generalise to higher dimensions.

\subsection{Prospects}

It has been observed recently that the families of hypergeometric orthogonal
polynomials in the Askey scheme can be associated to second order
superintegrable systems on spaces of constant curvature, such that limiting
cases of these polynomials can be derived from contractions of superintegrable
systems \cite{Kalnins&Miller&Post}.  On the other hand, an extension of the
methods and results in the present article will eventually reveal a natural
structure of a projective variety on the set of second order superintegrable
systems on spaces of constant curvature.  Combined with the above mentioned
observation, we arrive at the following conjecture.

\begin{conjecture}
	The set of hypergeometric orthogonal polynomials in the Askey scheme
	carries a natural structure of a projective variety, equipped with a
	linear action of the general linear group $\mathrm{GL}(3)$.  Moreover, the
	graph of orbits under this action and their degenerations reproduces the
	Askey scheme as well as the classification of second order superintegrable
	systems in dimension two.
\end{conjecture}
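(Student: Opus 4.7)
The plan is to reduce the conjecture to an extension of the present article's main result to second order superintegrable systems on two-dimensional spaces of constant curvature, and then to transfer the resulting algebraic geometric structure to the Askey scheme through the correspondence of Kalnins--Miller--Post. The strategy rests on three pillars: (i) a variety construction for non-degenerate superintegrable systems in curved backgrounds, (ii) a functorial assignment of orthogonal polynomial families to such systems, and (iii) a matching of Wigner--İn\"on\"u contractions with algebraic geometric degenerations inside orbit closures.

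First, I would generalise the main theorem of this paper from the Euclidean plane to spheres and hyperbolic planes, and ultimately to a unified complex model. The homogeneous polynomial integrability conditions derived here for the flat case should admit a curved analogue once one replaces the Euclidean Killing tensor algebra by its constant-curvature counterpart; the prolongation argument and the passage to a projective variety should go through essentially unchanged. The key conceptual point to verify is that the completely reducible ternary cubic obtained in this paper is not an artefact of flat geometry but reflects a genuine feature of the prolonged system. This would produce a single projective variety containing the non-degenerate second order superintegrable systems on all two-dimensional spaces of constant curvature, together with a natural group action extending the isometry actions on each fibre.

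Second, I would identify the linear $\mathrm{GL}(3)$-action. The natural candidate is the action on the three-dimensional ambient space of the ternary cubic, since this is precisely where the algebraic geometric setup makes a group larger than the pointwise isometry group visible. One has to check that this action preserves the locus of superintegrable systems, that it restricts to the appropriate isometry action on each constant curvature space, and that its orbit closures encode the contraction hierarchy. Combined with the Kalnins--Miller--Post assignment of hypergeometric orthogonal polynomial families to superintegrable systems, this would transport the orbit and degeneration structure onto the Askey scheme and thereby give both assertions of the conjecture simultaneously.

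The main obstacle will be showing that the assignment from superintegrable systems to polynomial families is itself a morphism of varieties, rather than merely a case-by-case list. At present the correspondence passes through the representation theory of the associated quadratic symmetry algebras and the analysis of separated solutions, neither of which is formulated in algebraic geometric language. Upgrading it to a genuine morphism almost certainly requires a uniform moduli-theoretic description of the relevant representations and of the interbasis expansion coefficients. A secondary but serious difficulty is that the Askey scheme contains families associated to \emph{degenerate} superintegrable systems, which fall outside the variety constructed in this article; extending the framework to incorporate degenerate systems, and verifying that the resulting enlarged variety still carries the conjectured $\mathrm{GL}(3)$-action with the correct orbit structure, will be an indispensable step before the conjecture can be established in full.
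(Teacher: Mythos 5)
The statement you are trying to establish is presented in the paper as a \emph{conjecture}: the authors offer no proof, only two sentences of motivation (the Kalnins--Miller--Post observation that contractions of second order superintegrable systems reproduce the limits between families in the Askey scheme, and Koornwinder's result that the Askey scheme is three manifolds with corners glued together). There is therefore no proof in the paper to compare yours against, and your submission is likewise not a proof but a research programme: every load-bearing step --- the extension of the variety construction from the Euclidean plane to two-dimensional constant curvature spaces, the verification that a linear $\mathrm{GL}(3)$-action on the ambient space of the ternary cubic preserves the superintegrability locus with the right orbit structure, and above all the upgrade of the assignment from superintegrable systems to hypergeometric families into a morphism of varieties --- is explicitly left open, as you acknowledge in your final paragraph. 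So the concrete gap is total: nothing in the proposal establishes either assertion of the conjecture, and it cannot be accepted as a proof.

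As an outline, however, your route is essentially the one the authors themselves envisage: the paper states that an extension of its methods ``will eventually reveal a natural structure of a projective variety on the set of second order superintegrable systems on spaces of constant curvature,'' and that the conjecture is arrived at by combining this with the Kalnins--Miller--Post correspondence. Two specific comments. First, your ``secondary but serious difficulty'' about degenerate systems is partly defused by the paper's own remark that degenerate superintegrable systems in dimension two are particular instances of non-degenerate ones; the genuinely open point is whether this persists on the curved spaces needed to cover the full Askey scheme, and whether the degenerate points $\mathcal V_\varnothing$ excluded from the fibre bundle description cause trouble for the orbit graph. Second, the obstacle you correctly single out as central --- that the correspondence with orthogonal polynomials currently runs through representation theory of quadratic symmetry algebras and interbasis expansions, with no moduli-theoretic formulation --- is precisely why the statement is a conjecture rather than a theorem, and your proposal contains no idea for closing it beyond naming it.
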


This conjecture is supported by the fact that the Askey scheme carries the
structure of three manifolds with corners glued together \cite{Koornwinder}.
As of yet, there is no generalisation of the Askey scheme to higher dimensions
or more general special functions known, but superintegrable systems seem to
provide the right context for that.

A parametrisation via a projective variety would allow to study hypergeometric
orthogonal polynomials using a broad range of algebraic geometric methods and
on a global level.  What we propose here is a paradigm shift.  While
hypergeometric polynomials or, more generally, special functions have always
been regarded as many different families, each of them of the form
$P_\alpha(x)$ with the parameter $\alpha$ varying in some open subset of
$\R^n$, our results suggest to consider them as a single family $P_\alpha(x)$,
where $\alpha$ now varies in an algebraic variety.

It is expected that a two-variable generalisation of the Askey scheme will be 
revealed in the structure of second order superintegrable systems on constant 
curvature spaces in three dimensions \cite{PostPrivate}.  Current work in 
progress extending the results reported here would then suggest a natural 
generalisation of the above conjecture to a multivariable Askey scheme.

\subsection{Method}

As will be explained in Section~\ref{sec:preliminaries}, a second order
maximally superintegrable system on an $n$-dimensional Riemannian manifold is
given by a (non-trivial) solution to the $2n-1$ equations
\begin{equation}
	\label{eq:KidV=dVi}
	dV^{(\alpha)}=K^{(\alpha)}dV,
	\qquad
	\alpha=0,1,\ldots,2n-2,
\end{equation}
for linearly independent second order Killing tensors $K^{(\alpha)}$ and
arbitrary potential functions $V^{(\alpha)}$, where $V=V^{(0)}$ is the
potential defining the Hamiltonian of the system.  The point of departure for
our approach to a classification of superintegrable systems is the observation
that this system, which at first glance is a non-linear system of partial
differential equations, is actually equivalent to a system of \emph{algebraic}
equations on a finite dimensional vector space.  This can be seen as follows.

First note that by definition the $K^{(\alpha)}$ belong to the finite
dimensional vector space of Killing tensors and are therefore, essentially,
algebraic objects -- as opposed to the arbitrary functions $V^{(\alpha)}$.  We
will therefore eliminate first the $V^{(\alpha)}$ for $\alpha\not=0$ and then
$V=V^{(0)}$ from the above equations, leaving equations on the $K^{(\alpha)}$
alone.

If $K^{(\alpha)}$ and $V$ are given, then the Equation~\eqref{eq:KidV=dVi} can
be used to obtain $V^{(\alpha)}$, provided the integrability condition
\begin{equation}
	\label{eq:dKidV=0}
	d(K^{(\alpha)}dV)=0
\end{equation}
is met.  This is the so-called \emph{Bertrand-Darboux condition} and already
eliminates the unknown functions $V^{(\alpha)}$ for $\alpha\not=0$.

If only the $K^{(\alpha)}$ are given, the Equations~\eqref{eq:dKidV=0} are
second order linear differential equations for $V$, the coefficients being
linear in the $K^{(\alpha)}$ and their first derivatives.  Following
\cite{Kalnins&Kress&Miller} we can use them to express all but one of $V$'s
second derivatives as linear combinations of $V$'s first derivatives, where
the coefficients are rational in the $K^{(\alpha)}$ and their first
derivatives.  In the case of the Euclidean plane considered here, for example,
this reads (in complex coordinates)
\footnote{%
	The factor $3/2$ chosen here for convenience differs from the convention
	used in \cite{Kalnins&Kress&Miller}.
}
\begin{equation}
	\label{eq:V''=CV'}
	\begin{bmatrix}
		V_{zz}\\
		V_{ww}
	\end{bmatrix}
	=
	\frac32
	\begin{bmatrix}
		C_{11}&C_{12}\\
		C_{21}&C_{22}
	\end{bmatrix}
	\begin{bmatrix}
		V_z\\
		V_w
	\end{bmatrix}.
\end{equation}
If the values of $V$, its first derivatives as well as the ``missing'' second
derivative (here $V_{zw}=\Delta V$) are prescribed at some generic point, the
above equations determine all higher derivatives of $V$ and hence $V$ itself.
The integrability conditions for the System~\eqref{eq:V''=CV'} are algebraic
equations in the coefficients $C_{ij}$ and their first derivatives, c.f.\ 
Equations~\eqref{eq:ICs}.  Now remember that the $C_{ij}$ were rational in the
$K^{(\alpha)}$ and their first derivatives.  Consequently, the integrability
conditions for the System~\eqref{eq:V''=CV'} are algebraic equations in the
$K^{(\alpha)}$ and their derivatives.  This also allows us to eliminate the unknown
function $V$.

Now observe that the (covariant) derivative is a linear operation.  The
integrability conditions of~\eqref{eq:V''=CV'} are therefore algebraic in the
$K^{(\alpha)}$ alone.  This defines a system of algebraic equations for $2n-1$
Killing tensors.  These equations determine whether $2n-1$ linearly
independent Killing tensors $K^{(\alpha)}$ can be completed to a solution of
the Equations~\eqref{eq:KidV=dVi}, necessary to define a superintegrable
system.

Our next observation is that a superintegrable system is a linear space,
expressed by the linearity of \eqref{eq:KidV=dVi} in
$(K^{(\alpha)},V^{(\alpha)})$.  This translates to the fact that the above
algebraic equations can be written in terms of the Plücker coordinates for the
vector space spanned by the $K^{(\alpha)}$.  Hence they define a subvariety in
the Grassmannian of ($2n-1$)-dimensional subspaces in the space of Killing
tensors.  It is this subvariety which we will call the \emph{variety of
superintegrable systems}.

In Section~\ref{sec:ASICs} we make the above explicit for superintegrable
systems in the Euclidean plane, in particular the algebraic equations defining
the variety of superintegrable systems.  For spaces where the classification
of superintegrable systems is known, it is a priori clear that these algebraic
equations can be solved explicitly.  However, the present example of the
Euclidean plane shows that it is much simpler to solve these equations from
scratch than to rewrite the known normal forms.  This is the content of
Section~\ref{sec:solution}.

In Section~\ref{sec:variety} we give a detailed description of the algebraic
geometric structure of the variety of superintegrable systems, such as the
irreducible components, their intersections, birational structure and
singularities.

%
%

\bigskip
\noindent
\textbf{Acknowledgements.}
The authors would like to thank Andreas Vollmer and Vsevolod Shevchishin for
useful discussions on the subject.

\section{Preliminaries}
\label{sec:preliminaries}

\subsection{Superintegrable systems}

A Hamiltonian system is a dynamical system characterised by a Hamiltonian
function $H(\mathbf p,\mathbf q)$ on the phase space of positions $\mathbf
q=(q_1,\ldots,q_n)$ and momenta $\mathbf p=(p_1,\ldots,p_n)$.  Its temporal
evolution is governed by the equations of motion
\begin{align*}
	\dot{\mathbf p}&=-\frac{\partial H}{\partial\mathbf q}&
	\dot{\mathbf q}&=+\frac{\partial H}{\partial\mathbf p}
\end{align*}
A function $F(\mathbf p,\mathbf q)$ on the phase space is called a
\emph{constant of motion} or \emph{first integral}, if it is constant under
this evolution, i.e.\ if
\[
	\dot F
	=\frac{\partial F}{\partial\mathbf q}\dot{\mathbf q}
	+\frac{\partial F}{\partial\mathbf p}\dot{\mathbf p}
	=\frac{\partial F}{\partial\mathbf q}\frac{\partial H}{\partial\mathbf p}
	-\frac{\partial F}{\partial\mathbf p}\frac{\partial H}{\partial\mathbf q}
	=0
\]
or
\[
	\{F,H\}=0,
\]
where
\[
	\{F,G\}=
	\sum_{i=1}^n
	\left(
		\frac{\partial F}{\partial q_i}
		\frac{\partial G}{\partial p_i}
		-
		\frac{\partial G}{\partial q_i}
		\frac{\partial F}{\partial p_i}
	\right)
\]
is the canonical Poisson bracket.  Such a constant of motion restricts the
trajectory of the system to a hypersurface in phase space.  If the system
possesses the maximal number of $2n-1$ functionally independent constants of
motion $F^{(0)},\ldots,F^{(2n-2)}$, then its trajectory in phase space is the
(unparametrised) curve given as the intersection of the hypersurfaces
$F^{(\alpha)}(\mathbf p,\mathbf q)=c^{(\alpha)}$, where the constants
$c^{(\alpha)}$ are determined by the initial conditions.  For such systems we
can solve the equations of motion exactly and in a purely algebraic way,
without having to solve explicitly any differential equation.

\begin{definition}
	A \emph{maximally superintegrable system} is a Hamiltonian system
	admitting $2n-1$ functionally independent constants of motion
	$F^{(\alpha)}$,
	\begin{align}
		\label{eq:integral}
		\{F^{(\alpha)},H\}&=0&
		\alpha&=0,1,\ldots,2n-2,
	\end{align}
	one of which we can take to be the Hamiltonian itself:
	\[
		F^{(0)}=H.
	\]
	A superintegrable system is \emph{second order} if the constants of motion
	$F^{(\alpha)}$ are of the form
	\begin{equation}
		\label{eq:quadratic}
		F^{(\alpha)}=K^{(\alpha)}+V^{(\alpha)},
	\end{equation}
	where
	\[
		K^{(\alpha)}(\mathbf p,\mathbf q)=\sum_{i=1}^nK^{(\alpha)}_{ij}(\mathbf q)p^ip^j
	\]
	is quadratic in momenta and
	\[
		V^{(\alpha)}(\mathbf p,\mathbf q)=V^{(\alpha)}(\mathbf q)
	\]
	a potential function depending only on the positions.  In particular,
	\begin{equation}
		\label{eq:Hamiltonian}
		H=g+V,
	\end{equation}
	where
	\[
		g(\mathbf p,\mathbf q)=\sum_{i=1}^ng_{ij}(\mathbf q)p^ip^j
	\]
	is given by the Riemannian metric $g_{ij}(\mathbf q)$ on the underlying
	manifold.  We call $V$ a \emph{superintegrable potential} if the
	Hamiltonian \eqref{eq:Hamiltonian} defines a superintegrable system.
\end{definition}

In this article we will be concerned exclusively with second order maximally
superintegrable systems and thus omit the terms ``second order'' and
``maximally'' without further mentioning.

The condition \eqref{eq:integral} for \eqref{eq:quadratic} and
\eqref{eq:Hamiltonian} splits into two parts, which are cubic respectively
linear in $\mathbf p$:
\begin{subequations}
	\label{eq:1st+3rd}
	\begin{align}
		\label{eq:3rd}\{K^{(\alpha)},g\}&=0\\
		\label{eq:1st}\{K^{(\alpha)},V\}+\{V^{(\alpha)},g\}&=0
	\end{align}
\end{subequations}

\subsection{Killing tensors}

The condition $\{K,g\}=0$ for $K(\mathbf p,\mathbf q)=K_{ij}(\mathbf q)p^ip^j$
is equivalent to $K_{ij}$ being a Killing tensor in the following sense.
\begin{definition}
	A (second order) \emph{Killing tensor} is a symmetric tensor field on a
	Riemannian manifold satisfying the Killing equation
	\[
		K_{ij,k}+K_{jk,i}+K_{ki,j}=0,
	\]
	where the comma denotes covariant derivatives.
\end{definition}
Note that the metric $g$ is trivially a Killing tensor, since it is
covariantly constant.

\subsection{Bertrand-Darboux condition}

The metric $g$ also allows us to identify symmetric forms and endomorphisms.
Interpreting a Killing tensor in this way as an endomorphism on $1$-forms,
equation \eqref{eq:1st} can be written in the form
\[
	dV^{(\alpha)}=K^{(\alpha)}dV,
\]
and shows that, once the Killing tensors $K^{(\alpha)}$ are known, the
potentials $V^{(\alpha)}$ can be recovered from $V=V^{(0)}$ (up to an
irrelevant constant), provided the integrability condition
\begin{equation}
	\label{eq:dKdV}
	d(K^{(\alpha)}dV)=0
\end{equation}
is satisfied.  This eliminates the potentials $V^{(\alpha)}$ for
$\alpha\not=0$ from our equations.  In fact, as we will see below, the
remaining potential $V=V^{(0)}$ can be eliminated as well, leaving equations
on the Killing tensors $K^{(\alpha)}$ alone.

\subsection{Non-degeneracy}

For simplicity of notation, let us write $V_i$ and $V_{ij}$ for the first and
second derivatives of the scalar function $V$.  The Equations \eqref{eq:dKdV}
can be used to express the second derivatives $V_{ij}$, for $i\not=j$, and
$V_{ii}-V_{jj}$ as linear combinations of the first derivatives $V_i$, the
coefficients being rational expressions in $K^{(\alpha)}_{ij}$ and
$K^{(\alpha)}_{ij,k}$.  This determines all higher derivatives of $V$ at a
non-singular point if $V$, $\nabla V$ and $\Delta V$ are known at this point.
Therefore an analytic potential $V$ is uniquely determined by the Killing
tensors $K^{(\alpha)}$ if $V$, $\nabla V$ and $\Delta V$ are prescribed at a
non-singular point of $V$.  This motivates the following definition, following
\cite{Kalnins&Pogosyan&Miller}.

\begin{definition}
	A superintegrable system is called \emph{non-degenerate}, if $\Delta V$
	and the components of $\nabla V$ are linearly independent functions.
\end{definition}

This article is concerned with the classification of superintegrable systems
which are non-degenerate.

Note that, by the above, the Killing tensors $K^{(\alpha)}$ of a
non-degenerate superintegrable system uniquely define an $(n+2)$-dimensional
linear space of corresponding superintegrable potentials $V$, parametrised by
the values of $V$, $\nabla V$ and $\Delta V$ at a fixed non-singular point.

\begin{definition}
	\label{def:free+fibre}
	We call the $(2n-1)$-dimensional subspace in the space of Killing tensors
	that is spanned by the Killing tensors $K^{(\alpha)}$ of a
	(non-degenerate) superintegrable system the associated
	\emph{(non-degenerate) free superintegrable system}.  The
	$(n+2)$-dimensional space of superintegrable systems with the same
	associated free superintegrable system will be called the \emph{fibre}
	over this free superintegrable system.
\end{definition}

\begin{remark}
	\label{rem:free}
	Setting $V$ and $V^{(\alpha)}$ identically zero gives a solution of the
	Conditions~\eqref{eq:1st+3rd} for any choice of Killing tensors
	$K^{(\alpha)}$.  Thus a $(2n-1)$-dimensional subspace in the space of
	Killing tensors is a (not necessarily non-degenerate) free superintegrable
	system in the above sense if it contains the metric $g$.
\end{remark}

\subsection{Special conformal Killing tensors}

In dimension two Killing tensors can be described equivalently via special
conformal Killing tensors.  This will considerably simplify our
characterisation of superintegrable systems.

\begin{definition}
	A \emph{special conformal Killing tensor} is a symmetric tensor field
	$L_{ij}$ satisfying
	\begin{subequations}
		\label{eq:Sinjukov}
		\begin{equation}
			\label{eq:L}
			L_{ij,k}=\tfrac12(\lambda_ig_{jk}+\lambda_jg_{ik}),
		\end{equation}
		where $\lambda_i$ denotes the covariant derivative of
		\begin{equation}
			\label{eq:lambda}
			\lambda\coloneq\tr L,
		\end{equation}
	\end{subequations}
	as can be seen from contracting $i$ and $j$ in \eqref{eq:L}.
\end{definition}
Note that the metric $g$ is trivially a special conformal Killing tensor as
well, since it is covariantly constant.
\begin{lemma}
	\label{lem:iso}
	A special conformal Killing tensor $L$ determines a Killing tensor $K$ via
	\begin{equation}
		\label{eq:K(L)}
		K\coloneq L-(\tr L)g.
	\end{equation}
	On $2$-dimensional constant curvature manifolds this defines an
	isomorphism between the space of Killing tensors and the space of special
	conformal Killing tensors, mapping $g$ to $-g$.
\end{lemma}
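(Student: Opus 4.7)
The plan is to prove the two assertions of the lemma in order, isolating the piece of genuine content at the very end. For the first assertion I would substitute $K_{ij} = L_{ij} - \lambda g_{ij}$ into the cyclic Killing sum $K_{ij,k} + K_{jk,i} + K_{ki,j}$ and invoke \eqref{eq:L}. The cyclic sum of $\tfrac12(\lambda_i g_{jk} + \lambda_j g_{ik})$ telescopes to $\lambda_i g_{jk} + \lambda_j g_{ik} + \lambda_k g_{ij}$, which is exactly what is needed to cancel the cyclic sum of $-\lambda_k g_{ij}$ coming from the $(\tr L)g$ term; hence $K_{(ij,k)} = 0$ and $K$ is a Killing tensor.

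For the second assertion, denote the linear map by $\Phi(L) := L - (\tr L)g$. Taking traces yields $\tr K = (1 - n)\tr L$, which in dimension $n = 2$ simplifies to $\tr L = -\tr K$. Three consequences are immediate: $\Phi(g) = g - 2g = -g$; $\Phi$ is injective, because $K = 0$ forces $\tr L = 0$ and then $L = K + (\tr L)g = 0$; and any preimage of a Killing tensor $K$ must be $L := K - (\tr K)g$, which uniquely pins down the inverse candidate.

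It remains to verify that this candidate $L$ really satisfies \eqref{eq:L} with $\lambda = -\tr K$. I would introduce the defect tensor
\[
T_{ijk} := L_{ij,k} - \tfrac12(\lambda_i g_{jk} + \lambda_j g_{ik})
\]
and show $T \equiv 0$. By construction $T$ is symmetric in $(i,j)$, and a short computation using $K_{(ij,k)} = 0$ shows that its cyclic sum $T_{ijk} + T_{jki} + T_{kij}$ vanishes. In dimension two these two conditions restrict $T$ to the two-dimensional representation $T_{ijk} = \epsilon_{ik} A_j + \epsilon_{jk} A_i$ for some covector $A$, and a direct $g^{ij}$-trace of the definition of $T$ yields $g^{ij} T_{ijk} = 0$, which by non-degeneracy of $\epsilon$ forces $A = 0$. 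A quicker alternative is a dimension count: on a two-dimensional constant curvature manifold both the space of Killing tensors and that of special conformal Killing tensors are six-dimensional, so the injectivity of $\Phi$ already entails surjectivity. This surjectivity step is the main obstacle; it is precisely where the two-dimensional tensor algebra (or, equivalently, the standard dimension formulas) and the constant curvature hypothesis do real work, while everything else reduces to routine cyclic- and trace-bookkeeping.
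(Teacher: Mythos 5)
Your proposal is correct, and for the surjectivity step it offers two routes, one of which coincides with the paper's and one of which is genuinely different. The paper's proof is terse: the Killing equation for $K$ is ``a direct consequence'' of \eqref{eq:Sinjukov} (your telescoping cyclic sum is exactly that computation spelled out), and surjectivity is obtained from injectivity plus the fact that both spaces are six-dimensional on a $2$-dimensional constant curvature manifold --- which is precisely your ``quicker alternative''. Your primary route via the defect tensor $T_{ijk}$ is a different and in fact stronger argument: the three properties you establish (symmetry in $(i,j)$, vanishing cyclic sum, vanishing $g$-trace) are a \emph{pointwise} linear-algebra statement, and since in dimension two the $(i,j)$-symmetric $3$-tensors form a $6$-dimensional space on which the cyclic-sum map surjects onto the $4$-dimensional space of totally symmetric tensors, the kernel is the $2$-dimensional family $\epsilon_{ik}A_j+\epsilon_{jk}A_i$, killed by the trace condition. (It is worth writing out this dimension count rather than merely asserting the representation.) What this buys is that the isomorphism holds on \emph{any} $2$-dimensional (pseudo-)Riemannian manifold, with no constant curvature hypothesis and no appeal to the known dimension of the space of Killing tensors; the constant curvature assumption is only needed for the dimension-count version, since ``both spaces are six-dimensional'' is a statement special to constant curvature. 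For the purposes of the paper, which only uses the lemma on the flat plane where both dimensions are explicit, either route suffices.
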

\begin{proof}
	The Killing equation for $K$ is a direct consequence of
	\eqref{eq:Sinjukov}.  For the second part observe that the map defined by
	\eqref{eq:K(L)} is injective and that the dimension of both spaces is
	known to be six.
\end{proof}
We can rewrite the Bertrand-Darboux condition \eqref{eq:dKdV} for
\eqref{eq:K(L)} in terms of $L$ as
\[
	d(LdV)=d\lambda\wedge dV
\]
or, in local coordinates, as
\[
	\sum_{i=1}^n\Bigl(L\indices{^i_{[j}}V\indices{_{k]i}}+L\indices{^i_{[j,k]}}V_i\Bigr)=\lambda_{[k}V_{j]},
\]
where the square brackets denote antisymmetrisation in the enclosed indices.
Using \eqref{eq:Sinjukov} this becomes
\begin{equation}
	\label{eq:dLdV:ijk}
	\sum_{i=1}^nL\indices{^i_{[j}}V_{k]i}=\tfrac32\lambda_{[k}V_{j]}.
\end{equation}

\subsection{Complex Euclidean plane}

We will consider the Euclidean plane, i.e.\ a complex $2$-dimensional vector
space equipped with a complex scalar product~$g$.  The scalar product defines
a complex Riemannian metric which, by abuse of notation, will be denoted by
$g$ as well.  In the Euclidean plane every special conformal Killing tensor is
of the form
\begin{subequations}
	\label{eq:L:plane}
	\begin{align}
		L&=A+bx^T+xb^T+cxx^T
		\intertext{with trace}
		\label{eq:lambda:plane}
		\lambda&=\tr A+2b^Tx+cx^Tx
	\end{align}
\end{subequations}
where $A$, $b$ and $c$ are, respectively, a symmetric $2\times2$ matrix, a
vector and a scalar.  We will combine these parameters into a symmetric
$3\times3$ matrix
\begin{align}
	\label{eq:L:matrix}
	\hat L&\coloneq
	\begin{bmatrix}
		c&b^T\\
		b&A
	\end{bmatrix}&
	A^T&=A
\end{align}
parametrising the space of special conformal Killing tensors in the plane.  In
particular, as a special conformal Killing tensor the metric $g$ is given by
the symmetric matrix
\begin{equation}
	\label{eq:metric}
	\hat g=
	\begin{bmatrix}
		0&0\\
		0&g
	\end{bmatrix}.
\end{equation}
For convenience we will choose a null basis and corresponding coordinates $z$
and $w$:
\begin{align}
	\label{eq:gij}
	g_{zz}=g_{ww}&=0&
	g_{zw}=g_{wz}&=1.
\end{align}
The Bertrand-Darboux condition in the form \eqref{eq:dLdV:ijk} then reads
\begin{equation}
	\label{eq:dLdV:zw}
	L_{ww}V_{zz}-L_{zz}V_{ww}
	=\tfrac32(\lambda_zV_w-\lambda_wV_z).
\end{equation}

\subsection{Functional independence}

So far we have ignored the distinction between linear and functional
independence of the constants of motion.  Recall that functional independence
means that their differentials are linearly independent almost everywhere.
This condition can be formulated in a purely algebraic fashion and thus be
incorporated into our algebraic geometric description.  Being more pragmatic,
we would say we can check functional independence a posteriori.  However, it
turns out (or is known) that in our case functional and linear independence
are equivalent.  That is why we will ignore this distinction right from the
beginning.

\subsection{The variety of free superintegrable systems}

Let us denote by $S^2\C^3$ the space of complex symmetric $3\times 3$ matrices
and by $S^2_0\C^3$ the subspace of matrices of the form \eqref{eq:L:matrix}
with $\tr A=0$.  In a null basis we have $\tr A=2A_{zw}$, so that we can
represent elements in the five-dimensional space $S^2_0\C^3$ as vectors
\[
	\hat L=(A_{zz},2b_z,c,2b_w,A_{ww}).
\]
We have seen that the space of Killing tensors on the Euclidean plane is
naturally isomorphic to $S^2\C^3$.  Hence a free superintegrable system on the
Euclidean plane in the sense of Remark~\ref{rem:free} is given by a
three-dimensional subspace in $S^2\C^3$ containing \eqref{eq:metric} or,
equivalently, by a two-dimensional subspace in $S^2_0\C^3$.  Therefore the
free superintegrable systems on the Euclidean plane constitute a projective
variety isomorphic to the Grassmannian $G_2(S^2_0\C^3)$ of $2$-planes in
$S^2_0\C^3$, which can be embedded into $\P(\Lambda^2S^2_0\C^3)$ under the
Plücker embedding
\[
	\begin{array}{rcl}
		G_2(S^2_0\C^3)&\hookrightarrow&\P(\Lambda^2S^2_0\C^3)\\[\medskipamount]
		\operatorname{span}\bigl\{\hat L^{(1)},\hat L^{(2)}\bigr\}&\mapsto&\hat L^{(1)}\wedge\hat L^{(2)}.
	\end{array}
\]
For simplicity of notation let us use the simple superscripts ``$1$'' and
``$2$'' instead of ``$(1)$'' and ``$(2)$''.  Then the image of the above map
is the projective variety of rank two skew symmetric $5\times5$ matrices:
\begin{align}
	\label{eq:rank2skew}
	&\hat L^1\wedge\hat L^2\notag\\
	&=
	\renewcommand{\arraystretch}{1.25}
	\begin{bmatrix}
		0&2(A^1_{zz}b^2_z\!-\!b^1_zA^2_{zz})&A^1_{zz}c^2\!-\!c^1A^2_{zz}&2(A^1_{zz}b^2_w\!-\!b^1_wA^2_{zz})&A^1_{zz}A^2_{ww}\!-\!A^1_{ww}A^2_{zz}\\
		&0&2(b^1_zc^2\!-\!c^1b^2_z)&4(b^1_zb^2_w\!-\!b^1_wb^2_z)&2(b^1_zA^2_{ww}\!-\!A^1_{ww}b^2_z)\\
		&&0&2(c^1b^2_w\!-\!b^1_wc^2)&c^1A^2_{ww}\!-\!A^1_{ww}c^2\\
		&\text{(skew)}&&0&2(b^1_wA^2_{ww}\!-\!A^1_{ww}b^2_w)\\
		&&&&0\\
	\end{bmatrix}
	\notag\\
	&\eqcolon
	\renewcommand{\arraystretch}{1}
	\begin{bmatrix}
		0&a_{30}&a_{20}&a_{10}&a_{00}\\
		-a_{30}&0&a_{21}&a_{11}&a_{01}\\
		-a_{20}&-a_{21}&0&a_{12}&a_{02}\\
		-a_{10}&-a_{11}&-a_{12}&0&a_{03}\\
		-a_{00}&-a_{01}&-a_{02}&-a_{03}&0
	\end{bmatrix}
\end{align}
This variety is defined by the Plücker relations, given by the vanishing of
the Pfaffians of the five principal minors of the above matrix:
\begin{equation}
	\label{eq:Pluecker:aij}
	\begin{split}
		a_{03}a_{21}-a_{02}a_{11}+a_{01}a_{12}&=0\\
		a_{03}a_{20}-a_{02}a_{10}+a_{00}a_{12}&=0\\
		a_{03}a_{30}-a_{01}a_{10}+a_{00}a_{11}&=0\\
		a_{02}a_{30}-a_{01}a_{20}+a_{00}a_{21}&=0\\
		a_{12}a_{30}-a_{11}a_{20}+a_{10}a_{21}&=0.
	\end{split}
\end{equation}
These are the algebraic equations assuring that the skew symmetric
matrix~\eqref{eq:rank2skew} is of rank two (or zero).  Let us summarise the
above in the following Proposition.
\begin{proposition}
	\label{prop:configurationspace}
	The free superintegrable systems in the Euclidean plane constitute a
	projective variety isomorphic to the Grassmannian $G_2(S^2_0\C^3)$ of
	$2$-planes in $S^2_0\C^3$, embedded into $\P(\Lambda^2S^2_0\C^3)$ as the
	$6$-dimensional variety of rank two skew symmetric matrices
	\eqref{eq:rank2skew} given by the Equations~\eqref{eq:Pluecker:aij}.
\end{proposition}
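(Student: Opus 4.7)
The plan is to assemble the proposition directly from the structural facts established in the preliminaries, with no new computation beyond one explicit wedge product.

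First, I would identify the ambient vector space. By Lemma~\ref{lem:iso}, the space of Killing tensors on the complex Euclidean plane is canonically isomorphic to the space of special conformal Killing tensors, which via the parametrisation~\eqref{eq:L:matrix} is $S^2\C^3$. Under this identification the metric $g$ corresponds to the matrix $\hat g$ of~\eqref{eq:metric}, and the hyperplane $S^2_0\C^3$ serves as a linear complement of $\C\cdot\hat g$ in $S^2\C^3$.

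Second, I would translate the definition of a free superintegrable system into Grassmannian language. By Remark~\ref{rem:free}, a free superintegrable system is a three-dimensional subspace of Killing tensors that contains $g$. Under the identification above such subspaces are in canonical bijection with two-dimensional subspaces of the quotient $S^2\C^3/\C\cdot\hat g\cong S^2_0\C^3$, that is, with points of the Grassmannian $G_2(S^2_0\C^3)$. This establishes the first half of the proposition.

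Third, I would apply the standard Plücker embedding $G_2(S^2_0\C^3)\hookrightarrow\P(\Lambda^2 S^2_0\C^3)$, sending $\operatorname{span}\{\hat L^1,\hat L^2\}$ to $[\hat L^1\wedge\hat L^2]$. A direct computation of this wedge in the ordered basis $(A_{zz},2b_z,c,2b_w,A_{ww})$ of $S^2_0\C^3$ reproduces exactly the matrix of coefficients displayed in~\eqref{eq:rank2skew}. The image of this map is the locus of decomposable $2$-vectors, which in the matrix representation is the variety of skew-symmetric $5\times5$ matrices of rank at most two. For a skew matrix this rank condition is equivalent to the vanishing of every $4\times4$ principal Pfaffian; the $5\times5$ skew matrix has exactly five such principal submatrices, producing the five quadrics~\eqref{eq:Pluecker:aij}. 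A dimension check ($\dim G_2(S^2_0\C^3)=2\cdot3=6$) confirms the claimed dimension of the image.

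The main obstacle is bookkeeping rather than conceptual: one must verify that the five Pfaffian equations cut out the Grassmannian scheme-theoretically, not merely set-theoretically, so that the stated identification is an isomorphism of projective varieties. This is the classical Plücker description of $G_2(V)$ inside $\P(\Lambda^2 V)$, which can be invoked directly; alternatively, since $\dim V=5$ is small, one can check the ideal generation by hand.
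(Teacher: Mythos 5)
Your proposal is correct and follows essentially the same route as the paper, which likewise identifies Killing tensors with $S^2\C^3$ via special conformal Killing tensors, passes from three-dimensional subspaces containing $\hat g$ to two-planes in $S^2_0\C^3$, and realises the Grassmannian through the Plücker embedding as the rank-two locus cut out by the five principal Pfaffians. The only cosmetic difference is that you phrase the reduction via the quotient $S^2\C^3/\C\cdot\hat g$ where the paper uses the complement $\{\tr A=0\}$ directly, and you flag the scheme-theoretic ideal question which the paper silently absorbs into the classical Plücker description.
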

In the next section we will show that free superintegrable systems which are
non-degenerate not only form a sub\emph{set}, but a sub\emph{variety} in the
above variety.  The non-degenerate superintegrable systems then form a fibre
bundle with $4$-dimensional linear fibres over this subvariety, explaining the
denomination in Definition~\ref{def:free+fibre}.

\section{The algebraic superintegrability conditions}
\label{sec:ASICs}

\subsection{Derivation}

We consider the Bertrand-Darboux conditions \eqref{eq:dKdV} for two Killing
tensors $K^{(1)}$ and $K^{(2)}$, rewritten in the form \eqref{eq:dLdV:zw} for
the corresponding special conformal Killing tensors $L^{(1)}$ and $L^{(2)}$:
\begin{equation}
	\label{eq:2dLdV}
	\begin{aligned}
		L^1_{ww}V_{zz}-L^1_{zz}V_{ww}&=\tfrac32(\lambda^1_zV_w-\lambda^1_wV_z)\\
		L^2_{ww}V_{zz}-L^2_{zz}V_{ww}&=\tfrac32(\lambda^2_zV_w-\lambda^2_wV_z).
	\end{aligned}
\end{equation}
As before, we write simple superscripts for simplicity of notation, as there
is no risk of confusion with exponents.  From \eqref{eq:lambda:plane} we
deduce that the second derivative of $\lambda$ is $\lambda_{ij}=cg_{ij}$.
Together with \eqref{eq:Sinjukov} and \eqref{eq:gij} we obtain for the
derivatives of the coefficients in \eqref{eq:2dLdV}:
\begin{align}
	\label{eq:L'}
	\renewcommand{\arraystretch}{1.25}
	\begin{aligned}
		L^i_{zz,z}&=0\qquad&L^i_{zz,w}&=\lambda^i_z\qquad&\lambda^i_{zz}&=0\qquad&\lambda^i_{wz}&=c^i\qquad\\
		L^i_{ww,w}&=0\qquad&L^i_{ww,z}&=\lambda^i_w\qquad&\lambda^i_{ww}&=0\qquad&\lambda^i_{zw}&=c^i\qquad
	\end{aligned}
\end{align}
for $i=1,2$.  Hence the derivatives of the coefficient determinant
\begin{subequations}
	\label{eq:D}
	\begin{equation}
		D\coloneq\det
		\begin{bmatrix}
			L^1_{zz}&L^2_{zz}\\
			L^1_{ww}&L^2_{ww}
		\end{bmatrix}
		=L^1_{zz}L^2_{ww}-L^1_{ww}L^2_{zz}
	\end{equation}
	are
	\begin{equation}
		\begin{aligned}
			D_z&=L^1_{zz}\lambda^2_w-\lambda^1_wL^2_{zz}&\qquad
			D_{zz}&=L^1_{zz}c^2-c^1L^2_{zz}&\qquad
			D_{zzz}&=0\\
			D_w&=\lambda^1_zL^2_{ww}-L^1_{ww}\lambda^2_z&
			D_{ww}&=c^1L^2_{ww}-L^1_{ww}c^2&
			D_{www}&=0
		\end{aligned}
	\end{equation}
	and
	\begin{equation}
		\begin{aligned}
			D_{zw}&=\lambda^1_z\lambda^2_w-\lambda^1_w\lambda^2_z&\qquad
			D_{zzw}&=\lambda^1_zc^2-c^1\lambda^2_z&\qquad
			D_{zzww}&=0\\
			&&
			D_{wwz}&=c^1\lambda^2_w-\lambda^1_wc^2.
		\end{aligned}
	\end{equation}
\end{subequations}
This shows that $D=D(z,w)$ is a cubic polynomial, but only quadratic in $z$
respectively in $w$.  In terms of the entries of the skew symmetric matrix
\eqref{eq:rank2skew}, this cubic is given by
\begin{equation}
	\label{eq:D:polynomial}
	D(z,w)
	=\sum a_{ij}z^iw^j,
\end{equation}
where the sum runs over all pairs $(i,j)\not=(2,2)$ with $0\le i,j\le2$.

Solving the linear system \eqref{eq:2dLdV} for $V_{zz}$ and $V_{ww}$, we
arrive at the system
\begin{equation}
	\label{eq:prolongation}
	\begin{bmatrix}
		V_{zz}\\
		V_{ww}
	\end{bmatrix}
	=
	\frac32
	\begin{bmatrix}
		C_{11}&C_{12}\\
		C_{21}&C_{22}
	\end{bmatrix}
	\begin{bmatrix}
		V_z\\
		V_w
	\end{bmatrix}
\end{equation}
with
\[
	\begin{bmatrix}
		C_{11}&C_{12}\\
		C_{21}&C_{22}
	\end{bmatrix}
	=\frac1{L^1_{zz}L^2_{ww}-L^1_{ww}L^2_{zz}}
	\begin{bmatrix}
		\lambda^1_wL^2_{zz}-L^1_{zz}\lambda^2_w&
		L^1_{zz}\lambda^2_z-\lambda^1_zL^2_{zz}\\
		\lambda^1_wL^2_{ww}-L^1_{ww}\lambda^2_w&
		L^1_{ww}\lambda^2_z-\lambda^1_zL^2_{ww}
	\end{bmatrix}.
\]
Comparing with \eqref{eq:D}, we see that the coefficient matrix can be written
as
\begin{equation}
	\label{eq:C}
	\begin{bmatrix}
		C_{11}&C_{12}\\
		C_{21}&C_{22}
	\end{bmatrix}
	=\frac1D
	\begin{bmatrix}
		-D_z&A_z\\
		B_w&-D_w
	\end{bmatrix}
\end{equation}
where
\begin{align*}
	A&\coloneq L^1_{zz}\lambda^2-\lambda^1L^2_{zz}&
	B&\coloneq\lambda^1L^2_{ww}-L^1_{ww}\lambda^2.
\end{align*}
By \eqref{eq:L'} the derivatives of $A_z$ and $B_w$ are
\begin{align}
	\label{eq:A'B'}
		A_{zz}&=0&
		A_{zw}&=D_{zz}&
		B_{ww}&=0&
		B_{wz}&=D_{ww}
\end{align}
and hence $A_z=A_z(w)$ and $B_w=B_w(z)$ are quadratic polynomials.  In terms
of the entries of the skew symmetric matrix \eqref{eq:rank2skew} they are
given by
\begin{align}
	\label{eq:AB}
	A_z(w)&=a_{21}w^2+2a_{20}w+a_{30}&
	B_w(z)&=a_{12}z^2+2a_{02}z+a_{03}.
\end{align}
That is, we can find the coefficients of $A_z$, $D$ and $B_w$ in,
respectively, the (overlapping) upper left, upper right and lower right
$3\times3$ submatrices of the skew symmetric $5\times 5$ matrix
\eqref{eq:rank2skew}.

From the expression \eqref{eq:C} together with \eqref{eq:A'B'} and the Plücker
relations \eqref{eq:Pluecker:aij} in the form \eqref{eq:Pluecker:AB} we easily
derive the following relations, where we have introduced two new symbols
$C_{122}=C_{12,w}$ and $C_{211}=C_{21,z}$:
\begin{align*}
	C_{11,w}&=C_{12}C_{21}&
	C_{12,z}&=C_{12}C_{11}&
	C_{11,z}&=C_{12}C_{22}+(C_{11})^2-C_{122}\\
	C_{22,z}&=C_{12}C_{21}&
	C_{21,w}&=C_{21}C_{22}&
	C_{22,w}&=C_{21}C_{11}+(C_{22})^2-C_{211}.
\end{align*}
The integrability conditions for these derivatives allow the derivatives of
$C_{122}$ and $C_{211}$ to be expressed as
\begin{align*}
  C_{122,z} &= C_{21}C_{122} + C_{22}C_{12}C_{21} \\
  C_{122,w} &= 2C_{11}C_{12}C_{21} + C_{12}(C_{22})^2 + C_{22}C_{122} - C_{12}C_{211} \\
  C_{211,z} &= 2C_{22}C_{12}C_{21} + C_{21}(C_{11})^2 + C_{11}C_{211} - C_{21}C_{112} \\
  C_{211,w} &= C_{12}C_{211} + C_{11}C_{12}C_{21}.
\end{align*}
We now have all derivatives of $C_{11}$, $C_{12}$, $C_{21}$, $C_{22}$, $C_{122}$ and $C_{211}$
expressed in terms of these symbols.  The remaining integrability conditions are generated by
\begin{subequations}
	\label{eq:ICs}
	\begin{equation}
		\begin{aligned}
			3C_{21}C_{122}-C_{11}C_{211}-C_{22}C_{12}C_{21}-C_{21}C_{11}C_{11}&=0\\
			3C_{12}C_{211}-C_{22}C_{122}-C_{11}C_{21}C_{12}-C_{12}C_{22}C_{22}&=0
		\end{aligned}
	\end{equation}
	and their differential consequence
	\begin{equation}
		\begin{split}
			2C_{122}C_{211}-C_{11}C_{21}C_{122}-C_{22}C_{12}C_{211}\\
			+C_{12}C_{12}C_{21}C_{21}-C_{11}C_{22}C_{12}C_{21}=0.
		\end{split}
	\end{equation}
\end{subequations}
Substituting \eqref{eq:C} into \eqref{eq:ICs} and taking \eqref{eq:A'B'} into
account, we obtain
\begin{subequations}
	\label{eq:SIC}
	\begin{equation}
		\label{eq:cubic}
		\begin{split}
			3A_zDD_{ww}-2A_zB_wD_z-2A_zD_w^2+DD_wD_{zz}&=0\\
			3B_wDD_{zz}-2A_zB_wD_w-2B_wD_z^2+DD_zD_{ww}&=0
		\end{split}
	\end{equation}
	as well as
	\begin{equation}
		\label{eq:quartic}
		2D^2D_{zz}D_{ww}-B_wDD_zD_{zz}-A_zDD_wD_{ww}-A_zB_wD_zD_w+A_z^2B_w^2=0.
	\end{equation}
\end{subequations}
Recall that $D$, $A_z$ and $B_w$ are polynomials in $z$ and $w$ with
coefficients $a_{ij}$.  The partial differential equations~\eqref{eq:SIC}
therefore define homogeneous algebraic equations in the $a_{ij}$.  In
combination with Proposition~\ref{prop:configurationspace} we now get our
first main result.

\begin{theorem}
	The set of non-degenerate (second order maximally) free superintegrable
	systems on the Euclidean plane has a natural structure of a projective
	variety, isomorphic to the subvariety in the variety of rank two skew
	symmetric $5\times5$ matrices \eqref{eq:rank2skew} defined by the
	algebraic equations \eqref{eq:SIC} for the polynomials
	\eqref{eq:D:polynomial} and \eqref{eq:AB}.
\end{theorem}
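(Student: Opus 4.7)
The plan is to combine Proposition~\ref{prop:configurationspace} with an explicit analysis of the prolongation system~\eqref{eq:prolongation}. By that proposition, the ambient set of free superintegrable systems is already identified with the Grassmannian $G_2(S^2_0\C^3)$, cut out in $\P(\Lambda^2S^2_0\C^3)$ by the Plücker relations~\eqref{eq:Pluecker:aij}. It therefore remains to show that, within this Grassmannian, non-degeneracy is equivalent to the vanishing of the algebraic expressions~\eqref{eq:SIC} in the Plücker coordinates $a_{ij}$.

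First I would fix a rank-two point of the Grassmannian, represented by two linearly independent $L^1,L^2\in S^2_0\C^3$, and consider the prolongation~\eqref{eq:prolongation} on the open locus $D\neq 0$. By the discussion preceding Definition~\ref{def:free+fibre}, the associated free system is non-degenerate precisely when~\eqref{eq:prolongation} admits a $4$-dimensional solution space, parametrised freely by the values of $V$, $\nabla V$ and $\Delta V$ at a non-singular point. Following the computation of the derivatives of $C_{11},C_{12},C_{21},C_{22}$ from~\eqref{eq:C} using~\eqref{eq:L'}, I would introduce $C_{122}\coloneq C_{12,w}$ and $C_{211}\coloneq C_{21,z}$ as auxiliary unknowns in a Frobenius-style prolongation, and close up the system as shown in the excerpt. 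An elementary cross-differentiation argument then shows that the complete set of compatibility conditions is generated by the two cubic identities and the quartic differential consequence~\eqref{eq:ICs}. Since $V$, $\nabla V$ and $\Delta V$ can be prescribed arbitrarily at a generic point exactly when these compatibility conditions hold identically, the equivalence is established.

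Next I would substitute the rational expressions~\eqref{eq:C} for the $C_{ij}$ into~\eqref{eq:ICs} and clear the denominator $D$, using~\eqref{eq:A'B'} to identify derivatives of $A_z$, $B_w$ and $D$. This yields precisely the polynomial identities~\eqref{eq:SIC}. Both sides are polynomials in $z$ and $w$, since $D$ is a (bi-quadratic) cubic and $A_z(w)$, $B_w(z)$ are quadratic by~\eqref{eq:D:polynomial} and~\eqref{eq:AB}. Requiring each coefficient in $z,w$ to vanish yields a finite system of algebraic equations in the $a_{ij}$. A direct inspection shows that these equations are homogeneous of degrees three and four in the $a_{ij}$, so they carve out a well-defined projective subvariety of $\P(\Lambda^2S^2_0\C^3)$, which intersected with the Grassmannian gives the asserted variety.

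The main obstacle I expect is the verification that~\eqref{eq:ICs} already generates the \emph{full} ideal of compatibility conditions of the prolonged system, with no hidden obstructions arising from further cross-differentiations of $C_{122}$ and $C_{211}$. This amounts to showing that, once the stated two cubic relations hold, the mixed partials $C_{122,zw}=C_{122,wz}$ and $C_{211,zw}=C_{211,wz}$ close up modulo the relations already imposed; the quartic relation in~\eqref{eq:ICs} is precisely the differential consequence that arises, and I would check by a careful but mechanical computation that no further independent identity is produced. The remaining bookkeeping---confirming homogeneity of~\eqref{eq:SIC}, and checking that clearing $D$ from~\eqref{eq:ICs} does not introduce spurious components on the locus $D\equiv 0$---can be handled by working on the affine chart $D\neq0$ and noting that the variety of rank-two matrices with $D\equiv0$ can be treated as a limit of the generic case.
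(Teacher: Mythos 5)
Your proposal follows essentially the same route as the paper: prolong the Bertrand-Darboux system via the auxiliary unknowns $C_{122}$ and $C_{211}$, extract the integrability conditions \eqref{eq:ICs}, substitute the rational expressions \eqref{eq:C} and clear the denominator $D$ (using \eqref{eq:A'B'}) to obtain the homogeneous conditions \eqref{eq:SIC} in the Plücker coordinates $a_{ij}$, and intersect with the Grassmannian of Proposition~\ref{prop:configurationspace}. The one place the paper is more careful than your closing remark is the locus $D\equiv0$: clearing denominators does admit two extra solutions (the degenerate points $\mathcal V_\varnothing$, where $D\equiv0$ and one of $A_z,B_w$ is a nonzero constant while the other vanishes), which the paper isolates explicitly in a subsequent remark rather than dismissing by a limiting argument.
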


\begin{definition}
	For brevity, we will call the variety defined in the above proposition the
	\emph{variety of superintegrable systems}.
\end{definition}

\noindent
Some remarks concerning this definition are in order.

\begin{remark}
	We can regard the variety of superintegrable systems as a subvariety in
	the projective space of skew symmetric $5\times5$ matrices given by the
	homogeneous equations~\eqref{eq:SIC} together with the Plücker
	relations~\eqref{eq:Pluecker:aij}, the latter assuring that the matrix
	rank is two.
\end{remark}

\begin{remark}
	Degenerate superintegrable systems in dimension two turn out to be
	particular instances of non-degenerate systems
	\cite{Kalnins&Kress&MillerI,Kalnins&Kress&Miller&Post}.  This is why we
	omit ``non-degenerate'' from the name of the variety.
\end{remark}

\begin{remark}
	By construction, every non-degenerate free superintegrable system defines
	a point on the above variety.  However, there are two valid solutions of
	the superintegrability conditions~\eqref{eq:SIC} with $D$ vanishing
	identically.  Indeed, by \eqref{eq:A'B'} the polynomials $A_z$ and $B_w$
	are constant in this case and by \eqref{eq:Pluecker:aij} or
	\eqref{eq:quartic} one of them must vanish.
	Regarding~\eqref{eq:prolongation} and~\eqref{eq:C}, the variety of
	superintegrable systems therefore contains two points which do not
	correspond to a non-degenerate free superintegrable system.  We call them
	the two \emph{degenerate points} and denote their union by $\mathcal
	V_\varnothing$ for reasons to become clear later.
\end{remark}

\begin{remark}
	Working over the complex numbers allows us to treat both real cases at
	once:  In the Euclidean case we impose that $z$ and $w$ as well as
	$a_{ij}$ and $a_{ji}$ be complex conjugates and in the Minkowski case we
	impose them to be real.  The corresponding involutions
	$a_{ij}\mapsto\overline{a_{ji}}$ and $a_{ij}\mapsto\overline{a_{ij}}$
	define two real forms of the variety of superintegrable systems, which
	classify superintegrable systems on the real Euclidean plane respectively
	the Minkowski plane.

	The involution $a_{ij}\mapsto\overline{a_{ji}}$ is equivalent to
	exchanging $z$ and $w$ as well as $A$ and $B$.  We will refer to this
	operation as \emph{conjugation}.
\end{remark}

The variety of superintegrable systems captures the essential (difficult) part
of the classification problem, since all non-degenerate (free and non-free)
superintegrable systems form a fibre bundle with $4$-dimensional linear fibres
over this variety (excluding the two degenerate points).  Obtaining the fibre
over a point in the base amounts to a (simple) integration of the
System~\eqref{eq:prolongation} for given~$C_{ij}$, see
Section~\ref{sec:classification}.

\subsection{Simplification}

Writing
\[
	l^i=(L^i_{zz},\lambda^i_z,c,\lambda^i_w,L^i_{ww}),
	\qquad
	i=1,2,
\]
we can arrange the derivatives of $D$ together with $A_z$ and $B_w$ in
the rank two matrix
\[
	l^1\wedge l^2
	=
	\begin{bmatrix}
		0&A_z&D_{zz}&D_z&D\\
		&0&D_{zzw}&D_{zw}&D_w\\
		&&0&D_{wwz}&D_{ww}\\
		&\text{(skew)}&&0&B_w\\
		&&&&0\\
	\end{bmatrix}.
\]
Having rank two implies that the Pfaffians of its five principal minors
vanish.  For the $(3,3)$ minor this yields the identity
\begin{subequations}
	\label{eq:Pluecker}
	\begin{equation}
		\label{eq:Pluecker:AB}
		A_zB_w=D_zD_w-DD_{zw}.
	\end{equation}
	The remaining four principal minors are differential consequences of this
	identity, namely the two derivatives of this identity with respect to $z$
	respectively $w$:
	\begin{equation}
		\begin{aligned}
			\label{eq:Pluecker:AB'}
			A_zD_{ww}&=D_wD_{zz}-DD_{zzw}&\qquad
			A_zD_{wwz}&=D_{zz}D_{zw}-D_zD_{zzw}\\
			B_wD_{zz}&=D_zD_{ww}-DD_{wwz}&
			B_wD_{zzw}&=D_{zw}D_{ww}-D_wD_{wwz}
		\end{aligned}
	\end{equation}
\end{subequations}
All other derivatives are identically satisfied.  Note that this is nothing
but a local (differential) version of the Plücker relations
\eqref{eq:Pluecker:aij}, which are equivalent to \eqref{eq:Pluecker:AB}.

With the above identities we can transform the cubic superintegrability
condition \eqref{eq:cubic} into
\begin{equation}
	\label{eq:ABfromD}
	\begin{split}
		4DD_wD_{zz}-3D^2D_{zzw}-2D_wD_z^2+2DD_zD_{zw}-2A_zD_w^2&=0\\
		4DD_zD_{ww}-3D^2D_{wwz}-2D_zD_w^2+2DD_wD_{zw}-2B_wD_z^2&=0.
	\end{split}
\end{equation}
Differentiating the first condition with respect to $w$ and replacing the term
containing $A_z$ using the Plücker relations \eqref{eq:Pluecker:AB'} yields
\begin{multline}
	\label{eq:Donly}
	D(
		2D_{zz}D_{ww}+D_zD_{wwz}+D_wD_{zzw}+D_{zw}^2)\\
		-(D_z^2D_{ww}+D_w^2D_{zz}+D_zD_wD_{zw}
	)=0.
\end{multline}
Doing similarly with the second condition gives the same result.  In the same
way we can replace all terms containing $A_z$ or $B_w$ in the quartic
superintegrability condition \eqref{eq:quartic} using the Plücker relations
\eqref{eq:Pluecker}.  This yields Equation~\eqref{eq:Donly} multiplied by $D$.
Consequently, given the Plücker relations \eqref{eq:Pluecker}, we can 
confirm that the quartic
superintegrability condition \eqref{eq:quartic} is a differential consequence
of the cubic superintegrability conditions \eqref{eq:cubic}.  Rewriting
Equations~\eqref{eq:ABfromD} and~\eqref{eq:Donly}, we can summarise the above
as follows.

\begin{lemma}
	\label{lem:equations}
	The Plücker relations~\eqref{eq:Pluecker:aij} are equivalent
	to~\eqref{eq:Pluecker:AB}.  If they are satisfied, then the conditions
	\eqref{eq:SIC} are equivalent to
	\begin{subequations}
		\label{eq:AB3}
		\begin{align}
			\label{eq:A3} A_zD_w^2&=2DD_wD_{zz}-\tfrac32D^2D_{zzw}-D_wD_z^2+DD_zD_{zw}\\
			\label{eq:B3} B_wD_z^2&=2DD_zD_{ww}-\tfrac32D^2D_{wwz}-D_zD_w^2+DD_wD_{zw}
		\end{align}
	\end{subequations}
	and imply
	\begin{multline}
		\label{eq:D3}
		D_z^2D_{ww}+D_w^2D_{zz}+D_zD_wD_{zw}\\
		=D(2D_{zz}D_{ww}+D_zD_{wwz}+D_wD_{zzw}+D_{zw}^2).
	\end{multline}
\end{lemma}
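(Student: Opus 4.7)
The lemma has three separate assertions; I would address them in the order given, since each part feeds into the next.

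\textbf{Step 1: Equivalence of the Plücker relations.} I would first prove that the polynomial identity \eqref{eq:Pluecker:AB}, namely $A_zB_w=D_zD_w-DD_{zw}$, is equivalent to the five scalar Plücker relations \eqref{eq:Pluecker:aij}. Since $D,A_z,B_w$ are explicit bivariate polynomials in $z,w$ with coefficients $a_{ij}$ given by \eqref{eq:D:polynomial} and \eqref{eq:AB}, both sides of \eqref{eq:Pluecker:AB} expand to polynomials in $z,w$, and I would simply match coefficients of each monomial $z^iw^j$. The resulting quadratic identities in the $a_{ij}$ are, up to relabelling, precisely the five Pfaffians in \eqref{eq:Pluecker:aij}; the remaining coefficient identities hold trivially. (Equivalently, one may observe that \eqref{eq:Pluecker:AB} is the Pfaffian of the $(3,3)$ principal minor of $l^1\wedge l^2$, and that this single identity together with its derivatives $\partial_z,\partial_w$ generates all five Pfaffians, since on the Grassmannian all the others are differential consequences of one non-degenerate one.) The four relations in \eqref{eq:Pluecker:AB'} will then be obtained for free as the $z$- and $w$-derivatives of \eqref{eq:Pluecker:AB} combined with \eqref{eq:A'B'}.

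\textbf{Step 2: Equivalence of \eqref{eq:SIC} and \eqref{eq:AB3} modulo Plücker.} Starting from the first equation of \eqref{eq:cubic}, I would eliminate $A_zB_w$ using \eqref{eq:Pluecker:AB} to rewrite $-2A_zB_wD_z$ as $-2D_z^2D_w+2DD_zD_{zw}$. Next, I would use the differential Plücker relation $A_zD_{ww}=D_wD_{zz}-DD_{zzw}$ from \eqref{eq:Pluecker:AB'} to rewrite $3A_zDD_{ww}$ as $3DD_wD_{zz}-3D^2D_{zzw}$. The first of \eqref{eq:cubic} then collapses to the first equation of \eqref{eq:ABfromD}, and a division by $2$ after isolating $A_zD_w^2$ yields \eqref{eq:A3} exactly. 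The conjugate argument (swapping $z\leftrightarrow w$ and $A\leftrightarrow B$) gives \eqref{eq:B3} from the second equation of \eqref{eq:cubic}. Conversely, running these substitutions backwards shows \eqref{eq:AB3} implies \eqref{eq:cubic}, so the two systems are equivalent modulo the Plücker identities. The quartic \eqref{eq:quartic} is already stated in the text to be a differential consequence of \eqref{eq:cubic} together with Plücker, so nothing new is required here.

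\textbf{Step 3: Derivation of \eqref{eq:D3} from \eqref{eq:AB3}.} I would differentiate \eqref{eq:A3} with respect to $w$. On the left-hand side, $A_{zw}=D_{zz}$ by \eqref{eq:A'B'}, so the derivative equals $D_{zz}D_w^2+2A_zD_wD_{ww}$. The term $2A_zD_wD_{ww}$ I would eliminate using $A_zD_{ww}=D_wD_{zz}-DD_{zzw}$, obtaining $2D_w^2D_{zz}-2DD_wD_{zzw}$ (which also cancels the $D_{zz}D_w^2$ contribution with the corresponding term on the right). On the right-hand side I would differentiate term by term and use $D_{zzww}=0$ from \eqref{eq:D} to kill the quartic derivative. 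After collecting, the remaining terms rearrange to exactly \eqref{eq:D3}. (As a consistency check, differentiating \eqref{eq:B3} with respect to $z$ and applying the conjugate substitutions should yield the same identity.)

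\textbf{Expected obstacle.} All three steps are essentially bookkeeping, and none poses a genuine conceptual difficulty. The one place that demands care is Step~3, where several cancellations must line up correctly: one has to apply the product rule to six terms, use both $A_{zw}=D_{zz}$ and $A_zD_{ww}=D_wD_{zz}-DD_{zzw}$ at the right moments, and exploit $D_{zzww}=0$ to drop the would-be quartic derivative. Keeping the mixed partials $D_{zzw}$ and $D_{wwz}$ typographically distinct and treating them symmetrically is the main trap to avoid.
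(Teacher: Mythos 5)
Your proposal is correct and follows essentially the same route as the paper: the differential Plücker identities \eqref{eq:Pluecker:AB} and \eqref{eq:Pluecker:AB'} are used to eliminate $A_zB_w$ and $A_zD_{ww}$ from \eqref{eq:cubic}, yielding \eqref{eq:AB3}, and \eqref{eq:D3} is obtained by differentiating \eqref{eq:A3} with respect to $w$ exactly as you describe. The only place you lean on the paper instead of doing the work is the quartic condition \eqref{eq:quartic}: the paper disposes of it by the same kind of Plücker substitution, which turns it into $D$ times \eqref{eq:D3} and hence into a consequence of \eqref{eq:AB3} — a short mechanical step you should carry out rather than cite.
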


Note that one can solve Equation~\eqref{eq:D3} for $D$ and then substitute the
solution into the Equations~\eqref{eq:AB3} or into the Plücker
relations~\eqref{eq:Pluecker} in order to determine $A_z$ and $B_w$.

\begin{corollary}
	\label{cor:equations}
	The variety of superintegrable systems is isomorphic to the subvariety in
	the projective space of skew symmetric $5\times5$
	matrices~\eqref{eq:rank2skew} given by the Plücker
	relations~\eqref{eq:Pluecker} and the Equations \eqref{eq:AB3} for the
	polynomials~\eqref{eq:D:polynomial} and~\eqref{eq:AB}.
\end{corollary}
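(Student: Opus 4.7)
The plan is to deduce the corollary directly from the preceding Theorem via Lemma~\ref{lem:equations}. That theorem already identifies the variety of superintegrable systems with the subvariety of the projective space of skew symmetric $5\times5$ matrices~\eqref{eq:rank2skew} cut out by the algebraic Plücker relations~\eqref{eq:Pluecker:aij} together with the superintegrability conditions~\eqref{eq:SIC}. The corollary merely replaces these two sets of equations by~\eqref{eq:Pluecker} and~\eqref{eq:AB3}, so the content is to verify that both collections cut out the same locus in $\P(\Lambda^2S^2_0\C^3)$.

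For the Plücker side, the first assertion of Lemma~\ref{lem:equations} gives that~\eqref{eq:Pluecker:aij} is equivalent to the single identity~\eqref{eq:Pluecker:AB}. Since $A_z$, $B_w$ and $D$ are polynomials in $z$ and $w$ whose coefficients are the $a_{ij}$, differentiating~\eqref{eq:Pluecker:AB} with respect to $z$ and $w$ and using~\eqref{eq:A'B'} reproduces the four relations in~\eqref{eq:Pluecker:AB'}, as already noted in the paragraph preceding the lemma. Hence imposing the full set~\eqref{eq:Pluecker} is the same as imposing~\eqref{eq:Pluecker:aij}. With the Plücker relations in force, the second assertion of Lemma~\ref{lem:equations} says that~\eqref{eq:SIC} is equivalent to~\eqref{eq:AB3}, while equation~\eqref{eq:D3} is only an implication and need not be added as a separate defining equation. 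Combining the two equivalences yields the claimed isomorphism of subvarieties.

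There is essentially no new work to do beyond what is already packaged in Lemma~\ref{lem:equations}; the corollary is a clean rewrite. The only point I would verify carefully is that each of the replacements genuinely goes through on the level of defining equations in the $a_{ij}$ and not merely on common solution sets, so that the resulting subvariety really coincides with the one in the Theorem. Since every equivalence provided by the lemma is a purely algebraic manipulation with the polynomials~\eqref{eq:D:polynomial} and~\eqref{eq:AB}, whose coefficients are precisely the $a_{ij}$, this is automatic, and the main obstacle has already been surmounted in establishing Lemma~\ref{lem:equations} itself.
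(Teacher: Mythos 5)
Your proposal is correct and follows essentially the same route as the paper: the corollary is read off directly from Lemma~\ref{lem:equations} applied to the preceding Theorem, with~\eqref{eq:Pluecker:aij} replaced by~\eqref{eq:Pluecker} and~\eqref{eq:SIC} by~\eqref{eq:AB3}, all equivalences being algebraic identities in the coefficients $a_{ij}$. The paper likewise treats the corollary as an immediate consequence of the lemma, so there is nothing further to add.
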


Recall that $D$, $A_z$ and $B_w$ are polynomials in $z$ and $w$ with
coefficients $a_{ij}$.  The Equations~\eqref{eq:Pluecker} and~\eqref{eq:AB3}
therefore define homogeneous algebraic equations in the $a_{ij}$.  All
together, this gives a set of 5 quadratic and 32 cubic equations.  There will
be no need here to write them down explicitly.

\begin{definition}
	We call the defining equations of the variety of superintegrable systems
	the \emph{algebraic superintegrability conditions}.
\end{definition}

\section{Solution, normal forms and classification}
\label{sec:solution}

\subsection{Splitting of the ternary cubic}

The following lemma is the key observation for most of what follows.
\begin{lemma}
	\label{lem:decomposability}
	The algebraic superintegrability conditions imply that the ternary cubic
	$D(z,w)$ can be decomposed into linear factors.
\end{lemma}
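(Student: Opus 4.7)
My plan is to deduce the complete reducibility of $D$ from divisibility relations imposed by the algebraic superintegrability conditions, which in turn produce many singular points on the projective cubic curve $\{D=0\}\subset\P^2$.

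First I extract a divisibility relation from equation~\eqref{eq:A3}. Rearranging gives
\[
  D_w\bigl(A_z D_w + D_z^2\bigr) = D\bigl(2 D_w D_{zz} - \tfrac{3}{2} D D_{zzw} + D_z D_{zw}\bigr).
\]
Assuming the generic case $\gcd(D,D_w)=1$, this forces $D\mid A_z D_w + D_z^2$. A bidegree count---$A_z D_w + D_z^2$ has bidegree at most $(2,4)$ in $(z,w)$ while $D$ has bidegree $(2,2)$---then shows that the quotient is a polynomial $H_1(w)$ in $w$ alone of degree at most $2$. By the $z\leftrightarrow w$ symmetry applied to~\eqref{eq:B3}, $B_w D_z + D_w^2 = D\,H_2(z)$ for some polynomial $H_2(z)$ of degree at most $2$.

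Evaluating the first identity at a root $\alpha$ of $A_z$ yields the polynomial identity
\[
  D_z(z,\alpha)^2 = H_1(\alpha)\,D(z,\alpha)
\]
in $z$. Since $D_z(\cdot,\alpha)$ is linear and $D(\cdot,\alpha)$ is quadratic in $z$, comparison of the $z^2$-coefficients gives $P(\alpha)(4P(\alpha)-H_1(\alpha))=0$ where $P(w)=a_{20}+a_{21}w$; in the main case $P(\alpha)\neq 0$ this forces $D(z,\alpha)$ to be proportional to the perfect square $D_z(z,\alpha)^2$ and hence to have a double root at some $z_0$, at which $D=D_z=0$. Substituting $(z_0,\alpha)$ into $B_w D_z + D_w^2 = D\,H_2$ then yields $D_w(z_0,\alpha)^2=0$, so $(z_0,\alpha)$ is a singular point of the projective cubic $\tilde D=0$. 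Repeating the argument for the second root of $A_z$ and for the two roots of $B_w$ exhibits, in the generic case, four singular points of the cubic.

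Finally, I invoke the classification of plane cubics: an irreducible cubic has at most one singular point, while a line-plus-smooth-conic configuration has at most two. Hence any plane cubic with three or more singular points must be a union of three lines, giving the required decomposition of $D$ into linear factors. The main technical obstacle is the careful analysis of degenerate configurations---multiple or coincident roots of $A_z$ or $B_w$, collisions among the four singular points, vanishing of $P(\alpha)$ or $H_1(\alpha)$, and failure of the generic coprimality assumption $\gcd(D,D_w)=1$---each of which either directly produces a linear factor of $D$ (to be peeled off before iterating the argument on the residual conic) or corresponds to a singularity at the point $[0{:}1{:}0]$ on the line at infinity that again forces reducibility.
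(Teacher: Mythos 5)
Your divisibility mechanism is sound as far as it goes, but the proof has a genuine gap, and it sits exactly where the main family of solutions lives. Consider the class $\mathring{\mathcal V}_{(1,1,1)}$ of Table~\ref{tab:solution}: there $D=\ell_1(z)\,\ell_2(z,w)\,\ell_3(w)$ with the three lines \emph{concurrent} (that is what \eqref{eq:det:abc} says), and $A_z$ is proportional to $\ell_3(w)^2$. Hence your hypothesis $\gcd(D,D_w)=1$ fails (the factor $\ell_1(z)$ divides $D_w$), and at the root $\alpha$ of $A_z$ one has $D(z,\alpha)\equiv 0$, so the identity $D_z(z,\alpha)^2=H_1(\alpha)\,D(z,\alpha)$ is vacuous and produces no singular point. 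Worse, no count of singular points can ever certify reducibility for this family: a concurrent triple of distinct lines is a completely reducible cubic with exactly \emph{one} singular point, indistinguishable by your criterion from an irreducible cuspidal cubic. So the entire content of the lemma for the largest component is deferred to your closing sentence on ``degenerate configurations'', and the fallback offered there --- peel off a linear factor and ``iterate the argument on the residual conic'' --- is not a legitimate move: the algebraic superintegrability conditions constrain $D$, not its cofactors, and nothing you have established excludes $D=(\text{line})\times(\text{irreducible conic})$, which has at most two singular points. Even your ``generic'' case is internally inconsistent: four distinct singular points would force the cubic to be non-reduced, hence divisible by the square of a linear form, which contradicts $\gcd(D,D_w)=1$ unless that form is constant; so under your own hypotheses the four points must collapse, and you never verify that at least three distinct ones survive.

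The paper's proof avoids all of this by working with the single condition \eqref{eq:D3}, which involves $D$ alone: differentiating it twice yields \eqref{eq:D3:zw}, namely $D_{zz}D_{zw}D_{ww}=2DD_{zzw}D_{wwz}$, and since the second derivatives of $D$ are linear while the third derivatives are constants, this exhibits $D$ directly as a product of three linear polynomials whenever $D_{zzw}D_{wwz}\neq 0$; the few branches where a third derivative vanishes are then settled by short explicit arguments. If you want to salvage your route, the observation that actually does the work in class $(1,1,1)$ is the one you dismiss as degenerate: $D(z,\alpha)\equiv 0$ means $(w-\alpha)\mid D$, the conjugate argument with $B_w$ gives $(z-\beta)\mid D$, and the residual factor is linear for degree reasons --- but this, together with all the remaining branches ($A_z$ or $B_w$ constant or identically zero, $\deg D\le 2$, repeated factors, collisions of singular points), must be written out in full before the proof can be considered complete.
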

\begin{proof}
	Differentiating Condition \eqref{eq:D3} with respect to $z$ respectively $w$
	results in
	\begin{subequations}
		\label{eq:D3:z,w}
		\begin{align}
			\label{eq:D3:z} D_wD_{zz}D_{zw}&=D(D_{zz}D_{wwz}+D_{zw}D_{zzw})\\
			\label{eq:D3:w} D_zD_{ww}D_{zw}&=D(D_{ww}D_{zzw}+D_{zw}D_{wwz}).
		\end{align}
	\end{subequations}
	Differentiating \eqref{eq:D3:z} with respect to $w$ or \eqref{eq:D3:w} with
	respect to $z$ we obtain
	\begin{equation}
		\label{eq:D3:zw}
		D_{zz}D_{zw}D_{ww}=2DD_{zzw}D_{wwz}.
	\end{equation}
	Recall that $D(z,w)$ is a cubic polynomial in $z$ and $w$.
	Hence the second derivatives $D_{zz}$, $D_{zw}$ and $D_{ww}$ are linear
	while the third derivatives $D_{zzw}$ and $D_{wwz}$ are constants.  We
	distinguish four cases, depending on whether these constants are zero or
	not.

	(1) If $D_{wwz}\not=0\not=D_{zzw}$, Equation~\eqref{eq:D3:zw}
	shows that $D$ decomposes into linear factors.

	(2) If $D_{wwz}=0\not=D_{zzw}$ we have $D_{zw}\not=0$
	and from \eqref{eq:D3:z} we deduce that $D$ is a constant multiple of
	$D_wD_{zz}$.  In particular we can assume $D_{zz}\not=0$ so that
	$D_{ww}=0$ by \eqref{eq:D3:zw}.  Consequently, $D_w$ only
	depends on $z$ and hence decomposes (over $\C$) into linear factors.  But
	then $D_wD_{zz}$ and therefore $D$ also decompose into linear factors.

	(3) The case $D_{zzw}=0\not=D_{wwz}$ becomes case (2) after
	interchanging $z$ and $w$.

	(4) In the remaining case $D_{zzw}=D_{wwz}=0$ the polynomial $D(z,w)$ is
	quadratic.  Hence the first derivatives $D_z$ and $D_w$ are linear and the
	second derivatives $D_{zz}$, $D_{zw}$ and $D_{ww}$ are constants.
	Equation~\eqref{eq:D3} then shows that $D$ is a constant multiple of
	$D_z^2D_{ww}+D_w^2D_{zz}+D_zD_wD_{zw}$.  If $D_{zw}\not=0$ then the
	Equations~\eqref{eq:D3:z,w} show that $D_zD_{ww}=D_wD_{zz}=0$.
	Therefore $D$ is a constant multiple of $D_zD_w$, which is a product of
	linear factors.  If $D_{zw}=0$ then $D$ is a constant multiple of
	$D_z^2D_{ww}+D_w^2D_{zz}$, which can also be decomposed (over $\C$) into
	linear factors.
\end{proof}
The above lemma allows us to write the cubic \eqref{eq:D} as a product of
three linear forms,
\begin{equation}
	\label{eq:D:splitting}
	D(z,w)=
	(a_1z+b_1w+c_1)
	(a_2z+b_2w+c_2)
	(a_3z+b_3w+c_3).
\end{equation}
The isometry group acts on each linear form as the dual of the standard
representation, c.f.\ Section~\ref{sec:normalforms}.  This action has three
orbits:  One orbit consisting of linear factors depending on $z$ alone (i.e.\ 
$b_i=0$), one consisting of linear factors depending on both $z$ and $w$ and
one consisting of factors depending on $w$ alone ($a_i=0$).  Accordingly we
denote the multiplicities of the linear factors in \eqref{eq:D:splitting} by a
triple with one label for each orbit (in this order).  The label ``$1$''
stands for a single factor, the label ``$2$'' for two proportional factors,
``$11$'' for two non-proportional factors and ``$0$'' for a constant.  Higher
multiplicities cannot appear due to the fact that $D(z,w)$ contains neither of
the cubic monomials $z^3$ and $w^3$.  See Table~\ref{tab:normalforms} for some
examples.

By~\eqref{eq:AB3}, the cubic $D(z,w)$ completely determines the
superintegrable system except for the degenerate cases where it does not
depend on $z$ or $w$.  That is why we will use these multiplicities to label
isometry classes of superintegrable systems.  Moreover, the factorisation
\eqref{eq:D:splitting} provides a geometric way to classify superintegrable
systems in the plane: Since each linear factor determines a projective line in
the plane, superintegrable systems can be labelled by planar arrangements of
three (possibly coinciding) projective lines, c.f.\ 
Figure~\ref{fig:inclusions}.  From \eqref{eq:prolongation} and \eqref{eq:C} we
see that these arrangements actually have an interpretation in terms of the
superintegrable potential:
\begin{proposition}
	The singular set of a superintegrable potential $V(z,w)$ in the fibre over
	a non-degenerate free superintegrable system is contained in a planar
	arrangement of (up to) three projective lines, given by the equation
	$D(z,w)=0$.
\end{proposition}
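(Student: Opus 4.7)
The plan is to combine the factorisation of $D$ from Lemma~\ref{lem:decomposability} with the observation that the prolongation system~\eqref{eq:prolongation} becomes a holomorphic Frobenius system off the zero locus of $D$. First, by Lemma~\ref{lem:decomposability} the cubic $D(z,w)$ admits the factorisation~\eqref{eq:D:splitting}, so the locus $\{D(z,w) = 0\}$ is a union of at most three lines in the plane, which project to the three (possibly coinciding) lines of the announced arrangement.

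Next, I would examine the coefficient matrix $(C_{ij})$ in~\eqref{eq:C}: its entries are rational functions with polynomial numerators $-D_z$, $A_z$, $B_w$, $-D_w$ and common denominator $D$, hence are holomorphic on the open set $U \coloneq \{(z,w) : D(z,w) \neq 0\}$. On $U$, the system~\eqref{eq:prolongation}, augmented by the auxiliary unknowns $C_{122}$ and $C_{211}$ and viewed as a first order Pfaffian system on the jet bundle with fibre coordinates $(V, V_z, V_w, V_{zw})$, has holomorphic coefficients. Its integrability conditions are precisely~\eqref{eq:ICs}, equivalently~\eqref{eq:SIC}, which hold by the assumption that $V$ belongs to the fibre over a non-degenerate free superintegrable system. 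The holomorphic Frobenius theorem then guarantees that every choice of initial data $(V, V_z, V_w, V_{zw})$ at a point of $U$ extends to a unique local holomorphic solution, with analytic continuation along any path in $U$. By uniqueness, this continuation reproduces the germ of $V$ at each point, so $V$ is holomorphic on all of $U$ and its singular set is contained in $\{D = 0\}$.

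The main obstacle is bookkeeping: one must check that the second order prolongation~\eqref{eq:prolongation}, together with the auxiliary first order relations for $C_{122}$ and $C_{211}$, really forms an involutive Pfaffian system precisely under the algebraic superintegrability conditions~\eqref{eq:SIC}. This is essentially already contained in the derivation leading to~\eqref{eq:ICs} in Section~\ref{sec:ASICs}; once that packaging is made explicit, the geometric statement follows immediately from the holomorphy of the $C_{ij}$ away from $\{D=0\}$.
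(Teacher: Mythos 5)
Your argument is correct and is essentially the paper's own (the paper gives no separate proof, simply noting that the claim follows "from \eqref{eq:prolongation} and \eqref{eq:C}" together with the factorisation of Lemma~\ref{lem:decomposability}); your Frobenius/analytic-continuation discussion just makes that one-line remark explicit. The only small imprecision is calling $C_{122}$ and $C_{211}$ "auxiliary unknowns" of the system for $V$ — they are explicit rational functions with denominator a power of $D$, hence holomorphic on $\{D\neq 0\}$ anyway, so nothing in the argument changes.
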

In summary, the splitting of the ternary cubic $D(z,w)$ provides an
intrinsic (algebraic as well as geometric) labelling scheme for
superintegrable systems.  In the next section we will see that it also allows
for a relatively simple solution of the algebraic superintegrability
conditions.

\subsection{Solution of the algebraic superintegrability conditions}

Owing to the fact that the cubic polynomial $D(z,w)$ is completely reducible
and only quadratic in $z$ and $w$, it must be of one of the following two
forms:
\begin{subequations}
	\begin{align}
		\label{eq:D:(1,1,1)}
		D(z,w)&=
		(a_1z+c_1)
		(a_2z+b_2w+c_2)
		(b_3w+c_3)\\
		\label{eq:D:(0,11,0)}
		D(z,w)&=
		(a_1z+b_1w+c_1)
		(a_3z+b_3w+c_3).
	\end{align}
\end{subequations}
In this parametrisation Equation~\eqref{eq:D3} for $D$ is easily solved.
\begin{proposition}
	\label{prop:down}
	As a set, the variety $\mathcal D$ of solutions to the
	Equation~\eqref{eq:D3} is the union
	\begin{equation}
		\label{eq:down:components}
		\mathcal D=
		\mathcal D_{( 1, 1, 1)}\cup
		\mathcal D_{(11, 0, 1)}\cup
		\mathcal D_{( 1, 0,11)}\cup
		\mathcal D_{( 0,11, 0)}
	\end{equation}
	of four classes, consisting of completely reducible ternary cubics of the
	following form.
	\begin{subequations}
		\label{eq:det}
		\begin{description}[wide,labelsep=1em,itemsep=1ex]
			\item[$\mathcal D_{(1,1,1)}$] $D(z,w)=(a_1z+c_1)(a_2z+b_2w+c_2)(b_3w+c_3)$
				subject to
				\begin{equation}
					\label{eq:det:abc}
					\det
					\begin{bmatrix}
						a_1&0&c_1\\
						a_2&b_2&c_2\\
						0&b_3&c_3
					\end{bmatrix}
					=0.
				\end{equation}
			\item[$\mathcal D_{(11,0,1)}$] $D(z,w)=(a_1z+c_1)(a_2z+c_2)(b_3w+c_3)$
			\item[$\mathcal D_{(1,0,11)}$] $D(z,w)=(a_1z+c_1)(b_2w+c_2)(b_3w+c_3)$
			\item[$\mathcal D_{(0,11,0)}$] $D(z,w)=(a_1z+b_1w+c_1)(a_3z+b_3w+c_3)$
				subject to\footnote{The minus sign in front of $b_3$ is correct.}
				\begin{equation}
					\label{eq:det:ab}
					\det
					\begin{bmatrix}
						a_1&b_1&c_1\\
						0&0&1\\
						a_3&-b_3&c_3
					\end{bmatrix}
					=a_1b_3+b_1a_3
					=0.
				\end{equation}
		\end{description}
	\end{subequations}
\end{proposition}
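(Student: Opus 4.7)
The plan is to combine Lemma~\ref{lem:decomposability} with the structural restriction that $D$ contains no $z^3$ or $w^3$ monomial (from its parametrisation~\eqref{eq:D:polynomial}), in order to enumerate the possible factorisations of~$D$, and then to verify~\eqref{eq:D3} case by case.

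By Lemma~\ref{lem:decomposability}, any $D$ satisfying~\eqref{eq:D3} factors into at most three linear forms $L_i(z,w) = a_i z + b_i w + c_i$ over~$\C$, where one factor may degenerate to a constant (the quadratic case in step~(4) of the proof of that lemma). The absence of $z^3$ and $w^3$ in $D$ forces $a_1 a_2 a_3 = 0 = b_1 b_2 b_3$, so at least one factor is free of $z$ and at least one is free of $w$. Classifying each non-constant $L_i$ by its orbit type under the isometry action---\emph{pure-$z$} if $b_i = 0$, \emph{pure-$w$} if $a_i = 0$, otherwise \emph{mixed}---and permuting the factors, the only admissible orbit distributions (up to degenerations lying in the closure of a higher-dimensional family) are the four listed in the proposition: $(1,1,1)$, $(11,0,1)$, $(1,0,11)$ and $(0,11,0)$. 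Any other configuration, such as three mixed factors or $(2,1,0)$, would reintroduce a $z^3$ or $w^3$ term, and is therefore excluded.

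For each admissible orbit type I would then substitute the parametrised form of $D$ into~\eqref{eq:D3} and determine the resulting constraint. For the rectangular cases $\mathcal{D}_{(11,0,1)}$ and $\mathcal{D}_{(1,0,11)}$, one of the mixed third derivatives $D_{zzw}$ or $D_{wwz}$ vanishes identically and the product structure of $D$ causes the remaining derivatives to factor compatibly, so that both sides of~\eqref{eq:D3} collapse to the same polynomial with no further condition on the coefficients. For the generic case $\mathcal{D}_{(1,1,1)}$ with $D = (a_1 z + c_1)(a_2 z + b_2 w + c_2)(b_3 w + c_3)$, the difference of the two sides of~\eqref{eq:D3} should factor through the scalar determinant in~\eqref{eq:det:abc}, whose vanishing is then necessary and sufficient. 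The quadratic case $\mathcal{D}_{(0,11,0)}$ is similar but simpler, since $D_{zzw}$ and $D_{wwz}$ vanish; direct computation should then reduce~\eqref{eq:D3} to the Euclidean orthogonality condition $a_1 b_3 + b_1 a_3 = 0$ in~\eqref{eq:det:ab}.

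The main obstacle is the polynomial expansion for the generic case $\mathcal{D}_{(1,1,1)}$, where both sides of~\eqref{eq:D3} are quintic in $z,w$ with numerous monomials. A blind expansion is impractical; the economical route is to use the product-rule structure and express each derivative of $D$ as a sum of products of two~$L_j$ with a derivative of the third, so that the contributions to~\eqref{eq:D3} group into $3 \times 3$ minors of the coefficient matrix whose rows are $(a_i, b_i, c_i)$, with $b_1 = 0 = a_3$. Extracting the determinant~\eqref{eq:det:abc} as a common factor then yields the required equivalence. An analogous treatment delivers~\eqref{eq:det:ab} in the $(0,11,0)$ case, with the minus sign in the second row of~\eqref{eq:det:ab} arising from the way the two mixed factors must be aligned once the constant third factor has been absorbed into the product.
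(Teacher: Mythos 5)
Your proposal follows essentially the same route as the paper, which likewise obtains the four families from Lemma~\ref{lem:decomposability} combined with the absence of the monomials $z^3$ and $w^3$ in $D$ (reducing $D$ to the shapes \eqref{eq:D:(1,1,1)} and \eqref{eq:D:(0,11,0)}) and then substitutes these parametrisations into \eqref{eq:D3}, a computation the paper declares ``easily solved'' and does not spell out. One minor imprecision to watch in the case-by-case verification: within a given parametrised family the displayed determinant condition is sufficient but not literally necessary for \eqref{eq:D3} --- for instance, for $D=LM$ with $L=a_1z+b_1w+c_1$ and $M=a_3z+b_3w+c_3$ the residual of \eqref{eq:D3} is $3(a_1b_3+a_3b_1)(a_1b_1M^2+a_3b_3L^2)$, which also vanishes when $a_1b_1=a_3b_3=0$ without \eqref{eq:det:ab} holding --- but those exceptional loci are exactly the ones absorbed into the other classes, so the asserted union is unaffected.
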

\begin{remark}
	The above classes are not disjoint.  Their intersections are given by
	Figure~\ref{fig:inclusions} with the following subclasses (and their
	conjugates):
	\begin{description}[wide,labelsep=1em,itemsep=1ex]
		\item[$\mathcal D_{(11,0,0)}$] $D(z,w)=(a_1z+c_1)(a_2z+c_2)$
		\item[$\mathcal D_{( 2,0,1)}$] $D(z,w)=(a_1z+c_1)^2(b_3w+c_3)$
		\item[$\mathcal D_{( 1,0,1)}$] $D(z,w)=(a_1z+c_1)(b_3w+c_3)$
	\end{description}
	These subclasses are not disjoint either.  Their intersections are given
	by the following subsubclasses (and their conjugates):
	\begin{description}[wide,labelsep=1em,itemsep=1ex]
		\item[$\mathcal D_{(2,0,0)}$] $D(z,w)=(a_1z+c_1)^2$
		\item[$\mathcal D_{(1,0,0)}$] $D(z,w)=(a_1z+c_1)$
		\item[$\mathcal D_{(0,1,0)}$] $D(z,w)=a_2z+b_2w+c_2$
	\end{description}
	Finally, these subsubclasses intersect in the class $\mathcal D_{(0,0,0)}$
	consisting of constant polynomials $D(z,w)$.
\end{remark}
\begin{figure}
	\centering
	\includegraphics[width=\textwidth]{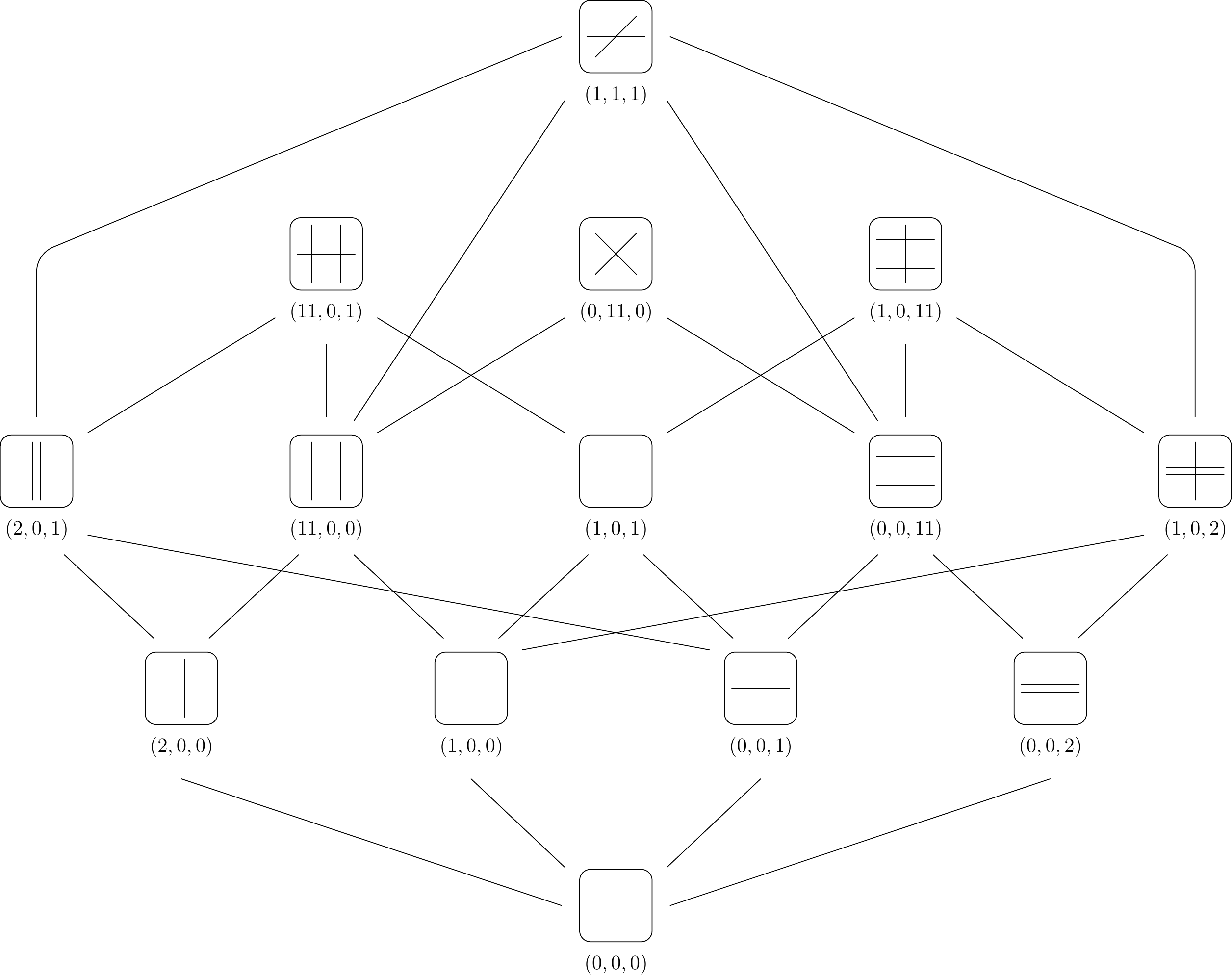}
	\caption{%
		Inclusion graph for classes of solutions of Equation~\eqref{eq:D3}.
		Inclusions are from bottom to top between classes joined by an edge.
		Two contiguous lines symbolise a double line.
	}
	\label{fig:inclusions}
\end{figure}
Corollary~\ref{cor:equations} now yields a complete solution of the algebraic
superintegrability conditions and hence a parametrisation of the variety of
superintegrable systems.
\begin{theorem}
	\label{thm:up:solution}
	As a set, the projective variety of superintegrable systems is the union%
	\footnote{%
		The ring accents indicate that the corresponding sets are not Zariski
		closed.
	}
	\begin{align*}
		\mathcal V=
		\mathring{\mathcal V}_{( 1, 1, 1)}&\cup
		\mathring{\mathcal V}_{(11, 0, 1)}\cup
		          \mathcal V _{(11, 0, 0)}\cup
		\mathring{\mathcal V}_{( 0,11, 0)}\\&\cup
		\mathring{\mathcal V}_{( 1, 0,11)}\cup
		          \mathcal V _{( 0, 0,11)}
	\end{align*}
	of the six classes given in Table~\ref{tab:solution} (up to conjugates).
\end{theorem}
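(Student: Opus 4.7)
The plan is to combine Proposition~\ref{prop:down}, which already classifies the cubic $D(z,w)$ alone via Equation~\eqref{eq:D3}, with Corollary~\ref{cor:equations}, which tells us that the full variety is cut out by the Plücker relations~\eqref{eq:Pluecker} and Equations~\eqref{eq:AB3}. So for each class of $D$ listed in~\eqref{eq:down:components} (together with its subclasses from Figure~\ref{fig:inclusions}), I need to determine the compatible polynomials $A_z(w)$ and $B_w(z)$ and see what points of the variety arise. The result is then matched against Table~\ref{tab:solution}.

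For the generic part of each class, where both $D_z\not\equiv0$ and $D_w\not\equiv0$, I would proceed as follows. The right-hand side of~\eqref{eq:A3} is a polynomial in $z,w$ depending only on the coefficients of $D$; since $A_z = A_z(w)$ depends on $w$ alone by~\eqref{eq:A'B'}, this right-hand side must be divisible by $D_w^2$ as a polynomial in $z$, and division yields $A_z$ uniquely. Equation~\eqref{eq:B3} determines $B_w$ symmetrically. The Plücker relation~\eqref{eq:Pluecker:AB} then serves as a consistency check. Carrying this out case by case for $\mathcal D_{(1,1,1)}$, $\mathcal D_{(11,0,1)}$, $\mathcal D_{(1,0,11)}$, $\mathcal D_{(0,11,0)}$ yields the four open classes $\mathring{\mathcal V}_{(1,1,1)}$, $\mathring{\mathcal V}_{(11,0,1)}$, $\mathring{\mathcal V}_{(1,0,11)}$, $\mathring{\mathcal V}_{(0,11,0)}$ of the theorem, with the explicit parametrisations collected in the table.

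The main obstacle is the degenerate subclasses at the bottom of Figure~\ref{fig:inclusions} in which $D$ loses dependence on $z$ or $w$. When $D$ depends only on $z$ (subclass $\mathcal D_{(11,0,0)}$ and its further degenerations), we have $D_w \equiv 0$, so~\eqref{eq:A3} gives no constraint on $A_z$ at all, while~\eqref{eq:B3} simplifies to an explicit expression for $B_w$, and the Plücker relation~\eqref{eq:Pluecker:AB} becomes $A_z B_w = 0$. This forces a case split: either $B_w\not\equiv 0$, in which case $A_z\equiv 0$ and the resulting locus lies in the Zariski closure of $\mathring{\mathcal V}_{(11,0,1)}$, or $B_w\equiv 0$, in which case $A_z$ becomes a free quadratic polynomial in $w$. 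The latter contributes three extra coefficients and yields the genuinely new closed component $\mathcal V_{(11,0,0)}$, which is not contained in any of the four generic closures precisely because of these extra free parameters. The conjugate analysis produces $\mathcal V_{(0,0,11)}$.

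To conclude, I would verify that the remaining intermediate subclasses in Figure~\ref{fig:inclusions} (such as $\mathcal D_{(2,0,1)}$ and $\mathcal D_{(1,0,1)}$ and their variants), in which $D$ still depends on both $z$ and $w$, lie in the Zariski closures of the four open classes and thus contribute no further components. The resulting six pieces are pairwise distinct by the shape of $D$ and of $(A_z, B_w)$, so they form the decomposition claimed by the theorem, with the explicit normal forms as in Table~\ref{tab:solution}.
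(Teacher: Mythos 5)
Your overall strategy is the same as the paper's: start from the classification of the cubic $D$ in Proposition~\ref{prop:down}, recover $A_z$ and $B_w$ from Equations~\eqref{eq:AB3} and the Plücker relations, and then check that the degenerate strata fall into the listed classes. The generic part of your argument is fine. However, your treatment of the degenerate component $\mathcal V_{(11,0,0)}$ contains a concrete error. You claim that when $D_w\equiv0$ and $B_w\equiv0$ the polynomial $A_z$ ``becomes a free quadratic polynomial in $w$'' contributing ``three extra coefficients''. This is not so: by~\eqref{eq:AB} the coefficients of $A_z(w)=a_{21}w^2+2a_{20}w+a_{30}$ are entries of the \emph{same} skew-symmetric matrix~\eqref{eq:rank2skew} as the coefficients of $D$ in~\eqref{eq:D:polynomial} — $a_{21}$ is simultaneously the coefficient of $z^2w$ in $D$ and of $w^2$ in $A_z$, and $a_{20}$ is the coefficient of $z^2$ in $D$ and of $w$ in $A_z$. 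For $D\in\mathcal D_{(11,0,0)}$ one has $a_{21}=0$ and $a_{20}=a_1a_2$ forced, so only the single constant $a_{30}$ is free and $A_z(w)=2a_1a_2w+a_{30}$ is \emph{linear}, exactly as in Table~\ref{tab:solution}. With three free coefficients your $\mathcal V_{(11,0,0)}$ would be strictly larger than the actual component (which is the $\P^3$ with coordinates $a_{00},a_{10},a_{20},a_{30}$) and would not even lie in the Grassmannian.

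A second, smaller slip: for a nonconstant $D\in\mathcal D_{(11,0,0)}$ your case split ``either $B_w\not\equiv0$ or $B_w\equiv0$'' is vacuous in its first branch, since $D_z\not\equiv0$ and Equation~\eqref{eq:B3} reduces to $B_wD_z^2=0$, which forces $B_w\equiv0$ outright; the genuine dichotomy $A_zB_w=0$ with both factors possibly nonzero only occurs for constant $D$ (class $(0,0,0)$) and at the two degenerate points where $D\equiv0$. Finally, note that the bulk of the paper's actual proof is the completeness check you defer to the last paragraph — verifying, class by class, that every degeneration of the parameters in~\eqref{eq:det} (e.g.\ $b_3=0$ in class $(1,1,1)$ combined with Condition~\eqref{eq:det:abc}) lands in one of the six listed classes or their subclasses. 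That step should be carried out explicitly rather than asserted.
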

\begin{proof}
	Recall that $D(z,w)$ determines the superintegrable system up to the two
	constants $a_{30}$ and $a_{03}$.  These can be determined from the
	superintegrability conditions~\eqref{eq:A3} and \eqref{eq:B3} or from the
	the Plücker relations~\eqref{eq:Pluecker}.  Solving the algebraic
	superintegrability conditions is therefore straightforward, so we will
	only justify the completeness of the list.

	Suppose $D$ lies in class $(1,1,1)$ with $a_1=0$ or $b_3=0$.  Without loss
	of generality we may suppose the latter.  In this case $c_3\not=0$,
	since otherwise $D$ would be identically zero, i.e.\ in class
	$(11,0,0)$.  Condition~\eqref{eq:det:abc} then implies $a_1=0$ or $b_2=0$,
	i.e.\ that $D$ lies in class $(0,1,0)$ or $(11,0,0)$.

	Note that if $D$ lies in class $(11,0,1)$ with $b_3=0$ then it lies in
	class $(11,0,0)$ and similarly for class $(1,0,11)$.

	Suppose now that $D$ is of class $(0,11,0)$ with $a_1a_3b_1b_3=0$.  Due to
	Condition~\eqref{eq:det:ab} we may assume without loss of generality that
	$b_1=a_1=0$ or $b_1=b_3=0$.  In the first case $D$ lies in class
	$(0,1,0)$, in the second case in class $(11,0,0)$.

	Suppose finally that $D$ lies in class $(0,1,0)$ with $a_2=0$ or $b_2=0$.
	In the first case $D$ also lies in class $(0,0,11)$ and in the second in
	class $(11,0,0)$.
\end{proof}

\subsection{Normal forms}
\label{sec:normalforms}

The set of superintegrable systems is invariant under isometries.  The variety
of superintegrable systems on the plane is therefore equipped with a natural
action of the Euclidean group.  On the polynomials $D$, $A$ and $B$ this
action is induced by the standard action of the Euclidean group on the plane.
In the null basis \eqref{eq:gij} translations and rotations are given by
shifts and shears, respectively, i.e.\ by
\begin{align*}
	(z,w)&\mapsto(z+c,w+d)&&c,d\in\C,\\
	(z,w)&\mapsto(\lambda z,w/\lambda)&&\lambda\in\C\setminus\{0\}.
\end{align*}
The different orbits and normal forms of this action can easily be derived
from Table~\ref{tab:solution} and are listed in Table~\ref{tab:normalforms}.

\subsection{Relative invariants}

In \cite{Kalnins&Kress&Miller} a complete set of relative invariants for
isometry classes of superintegrable systems was constructed.  We point out
that in our algebraic description this task is trivial:  The variables
$a_{ij}$ already constitute a complete set of relative invariants, as shown in
Table~\ref{tab:relativeinvariants}.

\begin{table}[h]
	\centering
	\begin{tabular}{cll}
		\toprule
		class & \multicolumn{1}{c}{vanishing relative invariants} & \multicolumn{1}{c}{relations} \\
		\midrule
		$( 1, 1, 1)$ & $\varnothing$ \\
		$(11, 0, 1)$ & $a_{12},a_{02},a_{03}$ \\
		$( 2, 0, 1)$ & $a_{12},a_{02},a_{03}$ & $a_{21}a_{01}=4a_{11}^2$, $a_{20}a_{00}=4a_{10}^2$ \\
		$( 0,11, 0)$ & $a_{12},a_{21},a_{11}$ \\
		$(11, 0, 0)$ & $a_{ij}$ unless $j=0$ \\
		$( 2, 0, 0)$ & $a_{ij}$ unless $j=0$ & $4a_{20}a_{00}=a_{10}^2$ \\
		$( 1, 0, 1)$ & $a_{ij}$ unless $0\le i,j\le1$ \\
		$( 0, 1, 0)$ & $a_{20},a_{21},a_{11},a_{12},a_{02}$ \\
		$( 1, 0, 0)$ & $a_{ij}$ unless $0\le i\le1$ and $j=0$ \\
		$( 0, 0, 0)$ & $a_{ij}$ unless $i=j=0$ \\
		\bottomrule
	\end{tabular}
	\bigskip
	\caption{%
		Relative invariants for isometry classes of superintegrable
		systems in the plane (up to conjugation).
	}
	\label{tab:relativeinvariants}
\end{table}

\subsection{Classification of superintegrable potentials}
\label{sec:classification}

By a separation of variables in $x=z+w$ and $y=z-w$ the system
\eqref{eq:prolongation} can be integrated for each normal form to yield the
corresponding superintegrable potentials as listed in
Table~\ref{tab:potentials}.  This perfectly matches the list
in~\cite{Kalnins&Kress&Pogosyan&Miller} and thereby confirms the present approach (or
the known classification).

\section{The variety of superintegrable systems}
\label{sec:variety}

After having solved the algebraic superintegrability conditions we now study
the geometric structure of the corresponding variety, i.e.\ the variety
$\mathcal V$ of superintegrable systems.  Recall that $\mathcal V$ is a
subvariety in the Grassmannian $G_2(S^2_0\C^3)$ of 2-planes in $S^2_0\C^3$,
embedded into $\P(\Lambda^2S^2_0\C^3)$ via the Plücker embedding, and that a
point on $\mathcal V$ is given by three polynomials $D$, $A_z$ and $B_w$.
Mapping $(D,A_z,B_w)\mapsto D$ defines a projection
\[
	\pi:\P(\Lambda^2S^2_0\C^3)\dashrightarrow\P(S^3\C^3)
\]
to the space of ternary cubics.  This map is defined on the complement of the
subspace $D=0$, which intersects $\mathcal V$ in the union $\mathcal
V_\varnothing$ of the two degenerate points.
\begin{center}
   \includegraphics[clip, trim=4.5cm 21.5cm 4.5cm 4.3cm, width=\textwidth]{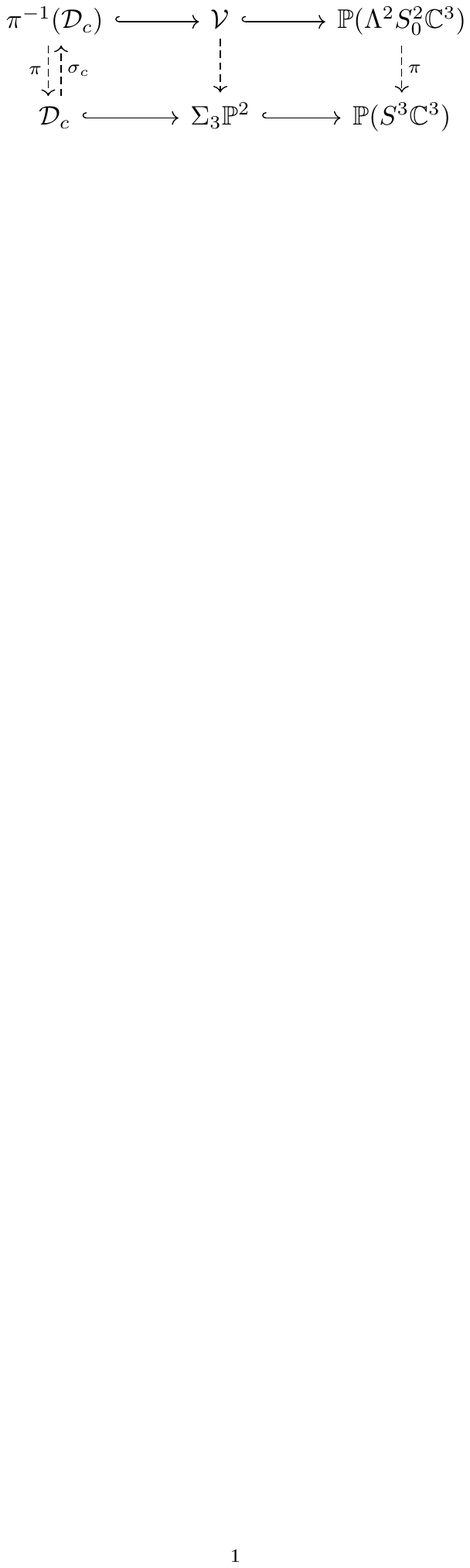}
\end{center}

By Lemma~\ref{lem:decomposability} the image $\mathcal D=\pi(\mathcal V)$
under this projection is contained in the subvariety of ternary cubics that
are decomposable into linear factors.  Denoting the symmetric product of three
projective planes by
\[
	\Sigma_3\P^2\coloneq(\P^2\times\P^2\times\P^2)/S_3,
\]
this subvariety is the image of the embedding of $\Sigma_3\P^2$ into the space
of ternary cubics, given by mapping the three linear factors to their product.

We now study the irreducible components~$\mathcal D_c\subset\mathcal D$ and
their preimages under~$\pi$.  The fact that generically the polynomials~$A_z$
and~$B_w$ are uniquely defined by~$D$ will provide rational right
inverses~$\sigma_c$ to the projection~$\pi$ over each irreducible
component~$\mathcal D_c$ and thereby a description of the irreducible
components in~$\mathcal V$.

\begin{proposition}
	Let $\hat{\mathcal D}_{(1,1,1)}$ be the variety of points
	\begin{subequations}
		\label{eq:resolution}
		\begin{equation}
			\label{eq:resolution:points}
			\bigl((a_1:c_1),(a_2:b_2:c_2),(b_3:c_3)\bigr)\in\P^1\times\P^2\times\P^1
		\end{equation}
		for which
		\begin{equation}
			\label{eq:resolution:det}
			\det
			\begin{bmatrix}
				a_1&0&c_1\\
				a_2&b_2&c_2\\
				0&b_3&c_3
			\end{bmatrix}
			=0.
		\end{equation}
	\end{subequations}
	Then the map given by sending \eqref{eq:resolution:points} to the cubic
	\eqref{eq:D:(1,1,1)} defines a regular birational map
	\[
		\hat{\mathcal D}_{(1,1,1)}\to\mathcal D_{(1,1,1)}.
	\]
	In particular, $\mathcal D_{(1,1,1)}$ is birational to
	$\P^1\times\P^1\times\P^1$ and hence irreducible.
\end{proposition}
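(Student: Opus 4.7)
The plan is to establish three things in sequence: the given map is a regular morphism landing in $\mathcal D_{(1,1,1)}$; it admits a rational inverse (birationality); and $\hat{\mathcal D}_{(1,1,1)}$ itself is birational to $\P^1\times\P^1\times\P^1$. Birationality of $\mathcal D_{(1,1,1)}$ to $\P^1\times\P^1\times\P^1$, together with irreducibility, then follows by composition.

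First I would observe that sending a triple in $\P^1\times\P^2\times\P^1$ to the product of the three corresponding linear forms is a regular morphism of tridegree $(1,1,1)$ into $\P(S^3\C^3)$, since no factor vanishes (each triple represents a point in projective space). Restriction to the hypersurface $\hat{\mathcal D}_{(1,1,1)}$ is therefore regular, and the image lies in $\mathcal D_{(1,1,1)}$ by the description given in Proposition~\ref{prop:down}: the shape \eqref{eq:D:(1,1,1)} together with the determinant condition \eqref{eq:det:abc} is precisely what characterises $\mathcal D_{(1,1,1)}$, and the condition \eqref{eq:resolution:det} coincides with \eqref{eq:det:abc}.

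Next I would construct a rational inverse on the dense open locus where $a_1,a_2,b_2,b_3$ are all nonzero. There the three linear factors of $D$ lie in the three distinct orbits under the isometry action (those depending only on $z$, those depending on both $z$ and $w$, and those depending only on $w$). Unique factorisation in $\C[z,w]$ determines the unordered triple of factors up to scalars, and the orbit types then pick out the ordered triple in $\P^1\times\P^2\times\P^1$. This defines the rational inverse.

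Finally I would show $\hat{\mathcal D}_{(1,1,1)}\sim_{\text{bir}}\P^1\times\P^1\times\P^1$ by exploiting the fact that the determinant \eqref{eq:resolution:det} expands as $a_1c_3b_2-b_3(a_1c_2-a_2c_1)=0$, which is linear in $b_2$ with coefficient $a_1c_3$. On the open locus $a_1c_3\ne 0$ the projection
\[
	\bigl((a_1:c_1),(a_2:b_2:c_2),(b_3:c_3)\bigr)\mapsto\bigl((a_1:c_1),(a_2:c_2),(b_3:c_3)\bigr)
\]
is regular and its inverse uses the above formula to recover $b_2$. Since $\P^1\times\P^1\times\P^1$ is irreducible, so is $\hat{\mathcal D}_{(1,1,1)}$, hence so is its birational image $\mathcal D_{(1,1,1)}$.

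The main obstacle is the second step: one must carefully excise the loci in $\hat{\mathcal D}_{(1,1,1)}$ where a factor degenerates and thereby shifts into one of the adjacent classes $\mathcal D_{(11,0,1)}$, $\mathcal D_{(1,0,11)}$, $\mathcal D_{(0,11,0)}$ of Figure~\ref{fig:inclusions}, since over such cubics the orbit-based rule for ordering the factors would be ambiguous. These exceptional loci have strictly smaller dimension and hence do not affect birationality, but their identification is needed to confirm that the inverse is defined on a dense open subset.
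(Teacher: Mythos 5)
Your proposal is correct in substance and follows the same three-part skeleton as the paper's proof (regularity of the product map, a rational inverse, projection of the middle $\P^2$ factor), but it constructs the rational inverse by a genuinely different route. The paper expands the product $(a_1z+c_1)(a_2z+b_2w+c_2)(b_3w+c_3)$ to express each coefficient $a_{ij}$ of $D$ as an explicit polynomial in $a_1,c_1,a_2,b_2,c_2,b_3,c_3$, uses \eqref{eq:resolution:det} to simplify $a_{11}=2a_1c_2b_3$, and then writes the inverse down in closed form as $(a_{ij})\mapsto\bigl((a_{12}:a_{02}),(a_{21}:a_{12}:\tfrac12a_{11}),(a_{21}:a_{20})\bigr)$. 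You instead invoke unique factorisation in $\C[z,w]$ together with the three orbit types to produce a set-theoretic inverse on the dense open locus $a_1a_2b_2b_3\neq0$. That is a legitimate alternative, but as written the sentence ``this defines the rational inverse'' elides the one point that actually needs justification: why the set-theoretic inverse is given by \emph{rational functions} of the $a_{ij}$. You should either quote the standard characteristic-zero fact that a generically injective dominant morphism of irreducible projective varieties is birational --- which requires the irreducibility of $\hat{\mathcal D}_{(1,1,1)}$ that you only establish in your third step, so the logical order would have to be rearranged --- or simply exhibit the formulas as the paper does; the explicit expansion has the further advantage that it is reused later in the paper (e.g.\ in the proof of the structure theorem, where the square of the left hand side of \eqref{eq:resolution:det} is rewritten in terms of the $a_{ij}$). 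Your final step is essentially identical to the paper's: the determinant is linear in one coordinate of the middle factor, so projecting that coordinate away is birational; you drop $b_2$ (coefficient $a_1c_3$) where the paper drops $c_2$ (coefficient $-a_1b_3$), an immaterial difference. Your closing remark about excising the degenerate loci is sound, since they are proper closed subsets of an irreducible variety and hence nowhere dense.
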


\begin{proof}
	The above regular map is given explicitly by expanding
	\eqref{eq:D:(1,1,1)} and comparing it to \eqref{eq:D:polynomial}:
	\begin{align*}
		a_{21}&=a_1a_2b_3&a_{10}&=a_1c_2c_3+c_1a_2c_3&a_{11}&=a_1b_2c_3+a_1c_2b_3+c_1a_2b_3\\
		a_{12}&=a_1b_2b_3&a_{01}&=c_1b_2c_3+c_1c_2b_3\\
		a_{20}&=a_1a_2c_3\\
		a_{02}&=c_1b_2b_3\\
		a_{00}&=c_1c_2c_3.
	\end{align*}
	By \eqref{eq:resolution:det} we have $a_{11}=2a_1c_2b_3$, which gives the
	rational inverse
	\[
		(a_{ij})\mapsto\bigl((a_{12}:a_{02}),(a_{21}:a_{12}:\tfrac12a_{11}),(a_{21}:a_{20})\bigr).
	\]
	Finally, a birational isomorphism $\hat{\mathcal
	D}_{(1,1,1)}\subset\P^1\times\P^2\times\P^1\dashrightarrow\P^1\times\P^1\times\P^1$
	is given by the projection $(a_2:b_2:c_2)\mapsto(a_2:b_2)$ in the middle
	factor.
\end{proof}

Under the duality of points and lines in $\P^2$, $\hat{\mathcal D}_{(1,1,1)}$
is the variety of triples of collinear points, the first of them confined to
the line $w=0$, the third to $z=0$.  The second point is then confined to
the line between the other two unless they both coincide with the origin.

In view of Hironaka's Theorem, the following proposition shows that the map
$\hat{\mathcal D}_{(1,1,1)}\to\mathcal D_{(1,1,1)}$ above is ``almost'' a
resolution of the singularities in $\mathcal D_{(1,1,1)}$.

\begin{proposition}
	The variety $\hat{\mathcal D}_{(1,1,1)}$ is smooth on the complement of
	the point $\bigl((0:1),(0:0:1),(0:1)\bigr)$, which is the preimage of the
	point $\mathcal D_{(0,0,0)}$.
\end{proposition}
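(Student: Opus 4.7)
My plan is to apply the Jacobian criterion to the single multi-homogeneous equation that cuts out $\hat{\mathcal D}_{(1,1,1)}$ inside the smooth four-dimensional ambient variety $\P^1\times\P^2\times\P^1$. Expanding the determinant in~\eqref{eq:resolution:det} along the middle row yields
\[
F(a_1,c_1;a_2,b_2,c_2;b_3,c_3)=a_1b_2c_3-a_1b_3c_2+a_2b_3c_1,
\]
a tri-linear polynomial of multi-degree $(1,1,1)$. Hence $\hat{\mathcal D}_{(1,1,1)}$ is a hypersurface, and a point is singular precisely when all seven partial derivatives of $F$ vanish simultaneously at that point (by Euler's relation in each factor, this also forces $F=0$).

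The next step is to solve the resulting system directly. The three partials $\partial F/\partial a_2=b_3c_1$, $\partial F/\partial b_2=a_1c_3$ and $\partial F/\partial c_2=-a_1b_3$ coming from the middle factor, combined with the projective conditions $(a_1,c_1)\neq(0,0)$ and $(b_3,c_3)\neq(0,0)$, force a short case analysis from $a_1c_3=0$: assuming $c_3=0$ forces $b_3\neq0$, hence $a_2=a_1=c_1=0$, contradicting the first projective condition; so $a_1=0$, whence $c_1\neq0$ and then $b_3c_1=0$ forces $b_3=0$, whence $c_3\neq0$. Substituting this back into the remaining partials $\partial F/\partial a_1=b_2c_3-b_3c_2=b_2c_3$ and $\partial F/\partial b_3=a_2c_1-a_1c_2=a_2c_1$ gives $b_2=0$ and $a_2=0$, so $c_2\neq0$, and the unique singular point is $\bigl((0:1),(0:0:1),(0:1)\bigr)$. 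The only real care required is the bookkeeping of which coordinate of each projective factor is forced to be non-zero as the analysis progresses; this is what I expect to be the one (minor) obstacle.

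It remains to match this singular point with the preimage of $\mathcal D_{(0,0,0)}$ under the regular map of the preceding proposition. At the point $\bigl((0:1),(0:0:1),(0:1)\bigr)$ each of the three linear factors in~\eqref{eq:D:(1,1,1)} reduces to a non-zero constant, so the image is the constant ternary cubic $D(z,w)=1$, which represents the single point $\mathcal D_{(0,0,0)}\in\P(S^3\C^3)$. Conversely, for the product $(a_1z+c_1)(a_2z+b_2w+c_2)(b_3w+c_3)$ to be a non-zero constant one needs $a_1=a_2=b_2=b_3=0$, which pins down a unique triple in $\P^1\times\P^2\times\P^1$, so the fibre over $\mathcal D_{(0,0,0)}$ consists of precisely this single point.
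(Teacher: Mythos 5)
Your proof is correct and is essentially the paper's argument: the paper computes the tangent of the determinant map via Jacobi's formula and requires the cofactors of the seven free entries to vanish, which is exactly your system of seven partial derivatives ($\partial\det A/\partial a_{ij}$ is the $(i,j)$ cofactor), followed by the same case analysis using the projective non-degeneracy of each factor. Your explicit check that the fibre over $\mathcal D_{(0,0,0)}$ is a single point is a small addition that the paper merely asserts.
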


In the above dual picture, this singularity corresponds to the configuration
when all three points coincide with the origin.

\begin{proof}
	$\hat{\mathcal D}_{(1,1,1)}$ is the zero locus of the determinant map
	$\P^1\times\P^2\times\P^1\to\C$, given by sending
	\eqref{eq:resolution:points} to the left hand side of
	\eqref{eq:resolution:det}, and its singularities are those points where
	the tangent map vanishes.  The tangent of the determinant map
	$A\mapsto\det A$ is given by Jacobi's formula as $X\mapsto\tr XC^T$, where
	$C$ is the cofactor matrix of~$A$.  Hence $\hat{\mathcal D}_{(1,1,1)}$ is
	singular at points \eqref{eq:resolution:points} for which the cofactor
	matrix of the matrix in \eqref{eq:resolution:det} is orthogonal to the the
	space of matrices $X$ of the form
	\[
		\begin{bmatrix}
			*&0&*\\
			*&*&*\\
			0&*&*
		\end{bmatrix}
	\]
	with respect to the usual Hermitian inner product on matrices.  That is,
	the singular locus is given by the minors of the non-zero entries of the
	matrix in \eqref{eq:resolution:det}.  In particular, we have
	$a_1b_3=a_1c_3=0$.  Now $a_1\not=0$ would imply $b_3=c_3=0$, which is
	impossible.  So $(a_1:c_1)=(0:1)$ and similarly $(b_3:c_3)=(0:1)$.  We
	also have $c_1a_2=a_1c_2=0$ and $b_2c_3=c_2b_3=0$, from which we conclude
	$a_2=b_2=0$ since $c_1,c_3\not=0$.
\end{proof}

\begin{proposition}
	$\mathcal D_{(11,0,1)}$ is a variety biregular to $\P^2\times\P^1$.  So
	obviously, $\mathcal D_{(11,0,1)}$ is irreducible and smooth.  The same
	holds for $\mathcal D_{(1,0,11)}$, since it is conjugated to $\mathcal
	D_{(11,0,1)}$.
\end{proposition}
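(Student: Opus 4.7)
By Proposition~\ref{prop:down}, every cubic in $\mathcal D_{(11,0,1)}$ has the form $(a_1z+c_1)(a_2z+c_2)(b_3w+c_3)$, i.e.\ it is a product of a binary quadratic form in $z$ and a linear form in $w$, where the two linear factors in $z$ are interchangeable. The natural map to consider is therefore
\[
	\varphi:\operatorname{Sym}^2(\P^1_z)\times\P^1_w\longrightarrow\mathcal D_{(11,0,1)},
\]
sending an unordered pair of linear forms in $z$ and a linear form in $w$ to their product. Using the classical identification $\operatorname{Sym}^2(\P^1)\cong\P^2$ (an unordered pair of roots corresponds to a binary quadratic form up to scalar), this becomes a map $\P^2\times\P^1\to\mathcal D_{(11,0,1)}$, and the task is to show it is a biregular isomorphism.

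The first step is to write $\varphi$ in coordinates. Expanding the product $(\alpha z^2+\beta z+\gamma)(\delta w+\epsilon)$ reads off the six (generically) nonzero coefficients of $D(z,w)$:
\[
	a_{21}=\alpha\delta,\quad a_{20}=\alpha\epsilon,\quad a_{11}=\beta\delta,\quad a_{10}=\beta\epsilon,\quad a_{01}=\gamma\delta,\quad a_{00}=\gamma\epsilon,
\]
with $a_{12}=a_{02}=0$ by construction. These are exactly the components of the outer product $(\alpha:\beta:\gamma)\otimes(\delta:\epsilon)$, so $\varphi$ is nothing but the Segre embedding $\P^2\times\P^1\hookrightarrow\P^5$ into the linear subspace cut out by $a_{12}=a_{02}=0$ in the ambient projective space.

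The second step is to identify the image of $\varphi$ with $\mathcal D_{(11,0,1)}$. The classical Segre map is a closed embedding whose image is precisely the rank-one locus of the matrix
\[
	\begin{pmatrix}a_{20}&a_{21}\\a_{10}&a_{11}\\a_{00}&a_{01}\end{pmatrix},
\]
and one checks directly that a ternary polynomial with $a_{12}=a_{02}=0$ factors as a binary quadratic in $z$ times a linear form in $w$ if and only if this matrix has rank at most one (over an algebraically closed field, rank-one matrices are outer products, and conversely an outer product gives a factorisable cubic). Hence $\varphi$ is a bijective closed immersion onto $\mathcal D_{(11,0,1)}$, therefore a biregular isomorphism $\P^2\times\P^1\xrightarrow{\sim}\mathcal D_{(11,0,1)}$. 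Since $\P^2\times\P^1$ is smooth and irreducible, so is $\mathcal D_{(11,0,1)}$.

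The statement for $\mathcal D_{(1,0,11)}$ is immediate from the conjugation symmetry $z\leftrightarrow w$ noted earlier, which maps the class $(11,0,1)$ to $(1,0,11)$ and is an involution of the variety of superintegrable systems. I do not anticipate any serious obstacle: unlike the case of $\mathcal D_{(1,1,1)}$, no extra determinantal constraint (like \eqref{eq:resolution:det}) arises here, because the ``middle'' linear factor depending on both $z$ and $w$ is absent. This is exactly why $\mathcal D_{(11,0,1)}$ is smooth and biregular (not merely birational) to a product of projective spaces.
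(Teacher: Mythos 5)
Your proof is correct and follows essentially the same route as the paper, which identifies $\mathcal D_{(11,0,1)}$ with $(\Sigma_2\P^1)\times\P^1\cong\P^2\times\P^1$ via the multiplication map; you merely make this explicit in coordinates as a Segre embedding onto the rank-one locus, which is a welcome but not essentially different elaboration.
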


\begin{proof}
	This follows from the fact that $\mathcal D_{(11,0,1)}$ is the set of
	cubics of the form $(a_1z+c_1)(a_2z+c_2)(b_3w+c_3)$ and therefore
	biregular to the variety $(\Sigma_2\P^1)\times\P^1$, which is biregular
	to $\P^2\times\P^1$.
\end{proof}

Under the duality of points and lines in $\P^2$ we can regard $\mathcal
D_{(11,0,1)}$ as the variety of triples of unordered points with two of them
confined to the line $w=0$ and the third one to the line $z=0$.

\begin{proposition}
	$\mathcal D_{(0,11,0)}$ is a variety biregular to the cubic threefold of
	points
	\begin{subequations}
		\begin{equation}
			\label{eq:D_(0,11,0):points}
			(a_{20}:a_{02}:a_{10}:a_{01}:a_{00})\in\P^4
		\end{equation}
		for which
		\begin{equation}
			\label{eq:D_(0,11,0):matrix}
			\det
			\begin{bmatrix}
				a_{02}&a_{01}&0\\
				a_{01}&4a_{00}&a_{10}\\
				0&a_{10}&a_{20}
			\end{bmatrix}
			=0.
		\end{equation}
	\end{subequations}
	In particular, $\mathcal D_{(0,11,0)}$ is birational to $\P^3$ and hence
	irreducible.  Moreover, it is singular in the pair of intersecting lines
	$\mathcal D_{(2,0,0)}\cup\mathcal D_{(0,0,2)}$.
\end{proposition}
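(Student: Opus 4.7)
The plan is to realise $\mathcal D_{(0,11,0)}$ as the image of the coefficient map sending $D\in\mathcal D_{(0,11,0)}$ to the tuple $(a_{20}:a_{02}:a_{10}:a_{01}:a_{00})\in\P^4$, verify that this image coincides with the cubic threefold defined by~\eqref{eq:D_(0,11,0):matrix}, and then extract the birational and singular structure directly from the defining cubic equation.

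By the preceding proposition every $D\in\mathcal D_{(0,11,0)}$ has the form $(a_1z+b_1w+c_1)(a_3z+b_3w+c_3)$ with $a_1b_3+b_1a_3=0$; this forces $a_{11}=0$, while the genuinely cubic coefficients $a_{30}$, $a_{21}$, $a_{12}$, $a_{03}$ vanish automatically since $D$ has degree two. The coefficient map is therefore a linear embedding of $\mathcal D_{(0,11,0)}$ into $\P^4$. To establish that the image lies in the cubic threefold I would expand the $3\times 3$ determinant in~\eqref{eq:D_(0,11,0):matrix} to obtain $F=4a_{20}a_{02}a_{00}-a_{02}a_{10}^2-a_{20}a_{01}^2$, and then substitute the above parametrisation, repeatedly using $a_1b_3=-b_1a_3$ to show that $F$ vanishes identically on the image.

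For the reverse inclusion I would exploit the observation that $F=0$ is precisely the condition that the symmetric matrix of the homogenised quadratic form in $z$, $w$ and an auxiliary variable $u$ (with vanishing $zw$-coefficient since $a_{11}=0$) has rank at most two; over $\C$ this is equivalent to the form factoring into two linear factors, and the absence of the $zw$-coefficient translates back into the constraint $a_1b_3+b_1a_3=0$. Birationality to $\P^3$ then follows by solving $F=0$ for $a_{00}$, giving the rational expression $a_{00}=(a_{02}a_{10}^2+a_{20}a_{01}^2)/(4a_{02}a_{20})$ wherever $a_{02}a_{20}\neq 0$, so that the linear projection forgetting $a_{00}$ defines a birational map $V(F)\dashrightarrow\P^3$ with an explicit rational inverse; irreducibility is then immediate.

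For the singular locus I would simply compute the gradient $\nabla F$. Vanishing of $F_{a_{00}}=4a_{02}a_{20}$ forces $a_{02}=0$ or $a_{20}=0$; in the first case the remaining partial derivatives reduce to $a_{01}=0$ together with $a_{10}^2=4a_{20}a_{00}$, which is exactly the image of $\mathcal D_{(2,0,0)}$ under the coefficient map, while the second case symmetrically produces $\mathcal D_{(0,0,2)}$. Each component is a rational curve, and the two meet at the single point $(0:0:0:0:1)$ corresponding to $\mathcal D_{(0,0,0)}$. The main obstacle I foresee is the reverse inclusion into $V(F)$: one must argue cleanly that the rank-two condition on the symmetric matrix produces a factorisation whose $zw$-coefficient automatically vanishes (given that the form itself has no $zw$-term), and separately handle the degenerate strata where $a_{20}$ or $a_{02}$ vanishes. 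The remaining expansions, projection, and gradient calculations are routine.
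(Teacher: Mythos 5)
Your proposal is correct and follows essentially the same route as the paper: both identify $\mathcal D_{(0,11,0)}$ with the linear section $a_{11}=0$ of the discriminant hypersurface of completely reducible ternary quadrics (your rank-$\le 2$ condition on the symmetric matrix of the homogenised form), both obtain birationality to $\P^3$ by projecting away $a_{00}$ with the same rational inverse, and both read off the singular locus from the vanishing of the relevant partial derivatives/cofactors. The one obstacle you flag resolves immediately, since the $zw$-coefficient of the product of two linear factors is exactly $a_1b_3+b_1a_3$, which is forced to vanish because the quadric itself has no $zw$-term.
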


\begin{proof}
	$\mathcal D_{(0,11,0)}$ is the variety of ternary quadrics of the form
	\eqref{eq:D:(0,11,0)} subject to \eqref{eq:det:ab} and hence a subvariety
	in the variety $\Sigma_2\P^2$ of completely reducible ternary quadrics.
	Writing a ternary quadric as
	\[
		D(z,w)=a_{20}z^2+a_{02}w^2+a_{11}zw+a_{10}z+a_{01}w+a_{00},
	\]
	$\Sigma_2\P^2\subset\P(S^2\C^3)\cong\P^5$ is the hypersurface given by
	\[
		\det
		\begin{bmatrix}
			a_{02}&a_{01}&\frac12a_{11}\\
			a_{01}&4a_{00}&a_{10}\\
			\frac12a_{11}&a_{10}&a_{20}
		\end{bmatrix}
		=0.
	\]
	Expanding \eqref{eq:D:(0,11,0)}, comparing it to \eqref{eq:D:polynomial}
	and taking \eqref{eq:det:ab} into account, the subvariety $\mathcal
	D_{(0,11,0)}\subset\Sigma_2\P^2$ is seen to be the linear section
	$a_{11}=0$.

	A birational isomorphism $\mathcal
	D_{(0,11,0)}\subset\P^4\dashrightarrow\P^3$ is given, for example, by
	projecting \eqref{eq:D_(0,11,0):points} onto the first four homogeneous
	coordinates.

	The singular locus of $\mathcal D_{(0,11,0)}$ can be computed similarly to
	that of $\mathcal D_{(1,1,1)}$ above.  It is given by the minors of the
	non-zero entries of the matrix in \eqref{eq:D_(0,11,0):matrix}, which
	define the subvariety $\mathcal D_{(2,0,0)}\cup\mathcal D_{(0,0,2)}$.
\end{proof}

Since all four components are irreducible and cover $\mathcal D$, we have the
following.

\begin{corollary}
	The decomposition~\eqref{eq:down:components} of $\mathcal D=\pi(\mathcal
	V)$ is a decomposition into irreducible components.
\end{corollary}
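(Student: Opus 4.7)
The plan is to assemble this corollary from the four preceding propositions together with Proposition~\ref{prop:down}. Proposition~\ref{prop:down} already establishes the set-theoretic equality
\[
	\mathcal D=
	\mathcal D_{(1,1,1)}\cup
	\mathcal D_{(11,0,1)}\cup
	\mathcal D_{(1,0,11)}\cup
	\mathcal D_{(0,11,0)},
\]
and the four propositions immediately preceding the corollary independently establish the irreducibility of each term: $\mathcal D_{(1,1,1)}$ is birational to $\P^1\times\P^1\times\P^1$, the two conjugate components $\mathcal D_{(11,0,1)}$ and $\mathcal D_{(1,0,11)}$ are biregular to $\P^2\times\P^1$, and $\mathcal D_{(0,11,0)}$ is birational to $\P^3$. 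Each component is also Zariski closed, being presented as the image of a projective variety under a regular map or as a linear section of a Zariski closed set of completely reducible ternary quadrics.

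What remains is to verify that none of the four irreducible closed subvarieties is contained in another, so that they are precisely the \emph{maximal} irreducible closed subsets of $\mathcal D$. First I would record that all four components have dimension three (from the stated birational models). Consequently, any inclusion $\mathcal D_{c_1}\subseteq\mathcal D_{c_2}$ between two of them would, by irreducibility and equidimensionality, force equality.

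It therefore suffices to produce, for each pair of distinct indices, a point of one that does not lie in the other. This is immediate from the factorisation patterns encoded in the labels: a generic point of $\mathcal D_{(1,1,1)}$ contains a linear factor depending on both~$z$ and~$w$, excluding it from $\mathcal D_{(11,0,1)}$ and $\mathcal D_{(1,0,11)}$, and is genuinely cubic, excluding it from the (at most) quadratic $\mathcal D_{(0,11,0)}$; a generic point of $\mathcal D_{(11,0,1)}$ has two non-proportional linear factors in~$z$ alone, which does not happen generically in the other three classes; conjugation handles $\mathcal D_{(1,0,11)}$; and a generic $\mathcal D_{(0,11,0)}$ polynomial is quadratic and therefore not of the trilinear form defining the other three. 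Combining these observations gives the claim.

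The step with any real content is the pairwise non-containment, but given the explicit parametrisations in Proposition~\ref{prop:down} and Table~\ref{tab:solution} this reduces to reading off the factorisation type of a generic member of each class, so I do not anticipate a serious obstacle; the corollary is essentially a bookkeeping consequence of the preceding structural results.
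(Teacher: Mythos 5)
Your proposal is correct and follows essentially the same route as the paper, whose entire argument is the single sentence that the four classes are irreducible and cover $\mathcal D$. Your additional verification that no component is contained in another (via equidimensionality plus generic factorisation types) is a detail the paper leaves implicit but which is genuinely needed for the conclusion that these are the irreducible \emph{components}, so including it is a point in your favour rather than a deviation.
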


We finally state our last main result, the structure theorem for the variety
of superintegrable systems in the Euclidean plane.  It shows that the
non-trivial components of $\mathcal V$ are blowups of the components of
$\mathcal D$ in certain pairs of intersecting lines.

\begin{theorem}
	The variety of superintegrable systems has a decomposition into six
	irreducible components,
	\begin{align*}
		\mathcal V=
		\mathcal V_{( 1, 1, 1)}&\cup
		\mathcal V_{(11, 0, 1)}\cup
		\mathcal V_{(11, 0, 0)}\cup
		\mathcal V_{( 0,11, 0)}\\&\cup
		\mathcal V_{( 1, 0,11)}\cup
		\mathcal V_{( 0, 0,11)},
	\end{align*}
	which are the Zariski closures of the corresponding classes given in
	Table~\ref{tab:solution}.  The projection $\pi:\mathcal
	V\dashrightarrow\mathcal D$ restricts to regular maps
	\begin{align*}
		(i)  \;\;&\mathcal V_{(1 ,1,1)}\rightarrow\mathcal D_{(1 ,1,1)}&
		(ii) \;\;&\mathcal V_{(11,0,1)}\rightarrow\mathcal D_{(11,0,1)}&
		(iv) \;\;&\mathcal V_{(0,11,0)}\rightarrow\mathcal D_{(0,11,0)}\\&&
		(iii)\;\;&\mathcal V_{(1,0,11)}\rightarrow\mathcal D_{(1,0,11)},
	\end{align*}
	each of which is an isomorphism over the complement of a pair of
	intersecting lines, namely:
	\begin{align*}
		(i-iii)\;\;&\mathcal D_{(1,0,0)}\cup\mathcal D_{(0,0,1)}&
		(iv)   \;\;&\mathcal D_{(2,0,0)}\cup\mathcal D_{(0,0,2)}
	\end{align*}
	On the remaining two components, the projection $\pi:\mathcal
	V\dashrightarrow\mathcal D$ restricts to central projections
	\begin{align*}
		(v)  \;\;&\mathcal V_{(11,0,0)}\dashrightarrow\mathcal D_{(11,0,0)}\\
		(vi) \;\;&\mathcal V_{(0,0,11)}\dashrightarrow\mathcal D_{(0,0,11)},
	\end{align*}
	each from one of the two degenerate points.
\end{theorem}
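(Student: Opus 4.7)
The plan is to combine the set-theoretic decomposition of Theorem~\ref{thm:up:solution} with the explicit formulas of Lemma~\ref{lem:equations}, defining each $\mathcal V_c$ as the Zariski closure of $\mathring{\mathcal V}_c$.  The irreducibility of each $\mathcal V_c$ will follow once it is realised as a birational modification of one of the already irreducible bases $\mathcal D_c$, $\mathcal D_{(11,0,0)}$ or $\mathcal D_{(0,0,11)}$ analysed in the preceding propositions, so the crux is to understand the projection $\pi$ on each component.

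For the four components $\mathcal V_{(1,1,1)}$, $\mathcal V_{(11,0,1)}$, $\mathcal V_{(1,0,11)}$ and $\mathcal V_{(0,11,0)}$ the cubic $D$ is never identically zero at a point of $\mathcal V_c$ (as can be read off from Table~\ref{tab:solution}), so $\pi$ is everywhere regular on them.  To construct a rational right inverse $\sigma_c\colon\mathcal D_c\dashrightarrow\mathcal V_c$ I would solve \eqref{eq:A3} for $A_z$ and \eqref{eq:B3} for $B_w$, which expresses $A_z$ and $B_w$ as polynomials in $D$ and its derivatives divided by $D_w^2$, respectively $D_z^2$.  The resulting $\sigma_c$ is then regular wherever $D_w\not\equiv 0$ and $D_z\not\equiv 0$ as polynomials, and on this locus $\pi|_{\mathcal V_c}$ is an isomorphism:  indeed $\pi\circ\sigma_c=\mathrm{id}$ by construction, while $\sigma_c\circ\pi=\mathrm{id}$ follows from the fact that $A_z$ and $B_w$ are uniquely determined by $D$ through~\eqref{eq:AB3}.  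Inspecting the parametrisations in Table~\ref{tab:solution} then shows that $D$ loses its $w$-dependence exactly on $\mathcal D_{(1,0,0)}$ and its $z$-dependence exactly on $\mathcal D_{(0,0,1)}$ for the three cubic classes, and on $\mathcal D_{(2,0,0)}$ respectively $\mathcal D_{(0,0,2)}$ for the quadratic class $(0,11,0)$; in each case the two lines meet at $\mathcal D_{(0,0,0)}$.

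The two remaining components I would handle directly.  On $\mathcal V_{(11,0,0)}$ the cubic $D=(a_1z+c_1)(a_2z+c_2)$ is independent of $w$, so $D_w\equiv 0$; combined with~\eqref{eq:AB3} and the Plücker relation~\eqref{eq:Pluecker:AB} this forces $B_w\equiv 0$, i.e.\ $a_{03}=a_{02}=a_{12}=0$, while $A_z=2a_{20}w+a_{30}$ retains $a_{30}\in\C$ as a free parameter.  Thus, in coordinates, $\mathcal V_{(11,0,0)}$ is a one-parameter linear extension of $\mathcal D_{(11,0,0)}$ whose projective closure contains precisely the degenerate point with $a_{30}$ as its only nonzero coordinate; the projection $\pi$, which forgets the $a_{30}$-coordinate, is therefore the central projection from this degenerate point.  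The component $\mathcal V_{(0,0,11)}$ is obtained by conjugation.

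The main obstacle in this plan is the bookkeeping verification that the rational expressions for $a_{30}$ and $a_{03}$ coming from~\eqref{eq:AB3} simplify to regular functions on the stated open subset of each $\mathcal D_c$, and that the closures match up so that the sections $\sigma_c$ really do land in the intended component $\mathcal V_c$ rather than in some other component of $\mathcal V$.  This amounts to a careful case-by-case calculation using the explicit normal forms in Table~\ref{tab:solution}, without any anticipated conceptual difficulty.
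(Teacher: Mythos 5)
Your overall strategy coincides with the paper's: recover $A_z$ and $B_w$ (equivalently $a_{30}$ and $a_{03}$) from $D$ to build rational sections of $\pi$ over each irreducible base $\mathcal D_c$, and treat $\mathcal V_{(11,0,0)}$ and $\mathcal V_{(0,0,11)}$ directly as one-parameter linear extensions of $\mathcal D_{(11,0,0)}$ and $\mathcal D_{(0,0,11)}$ with $\pi$ the central projection from a degenerate point. That part matches. However, there is a genuine gap at precisely the step you dismiss as routine bookkeeping, namely the identification of the locus over which $\pi$ fails to be an isomorphism.

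Dividing \eqref{eq:A3} by $D_w^2$ produces a section that is regular only where $D_w\not\equiv0$, and the locus $\{D_w\equiv0\}$ inside each of the components $(1,1,1)$, $(11,0,1)$ and $(0,11,0)$ is not the line $\mathcal D_{(1,0,0)}$ (respectively $\mathcal D_{(2,0,0)}$): it is the whole surface $\mathcal D_{(11,0,0)}$ of cubics of the form $(a_1z+c_1)(a_2z+c_2)$. For instance, in class $(11,0,1)$ the cubic $D=(a_1z+c_1)(a_2z+c_2)(b_3w+c_3)$ loses its $w$-dependence whenever $b_3=0$, a two-parameter family, so your assertion that ``$D$ loses its $w$-dependence exactly on $\mathcal D_{(1,0,0)}$'' is false, and your argument as written only yields the isomorphism over the complement of $\mathcal D_{(11,0,0)}\cup\mathcal D_{(0,0,11)}$ — strictly weaker than the statement. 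The paper closes exactly this gap by producing several independent rational expressions for $a_{30}$: two from the Plücker relations (with denominators $a_{12}$ and $a_{02}$), one from evaluating \eqref{eq:A3} at $z=w=0$ (with denominator $a_{01}^2$), and the closed-form expressions read off from Table~\ref{tab:solution} such as $a_{30}=a_{20}^2/a_{21}$; it then shows that the common zero locus of all numerators and denominators is only the claimed pair of lines, which for $\mathcal D_{(1,1,1)}$ needs the extra observation that the determinantal relation \eqref{eq:resolution:det} forces $a_{11}=0$ on that locus. None of this is visible from the single quotient by $D_w^2$, so the case-by-case verification you defer is where the actual content lies, and as set up your plan would not reach the stated exceptional loci.
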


\begin{proof}
	The projection $\pi$ is regular on the complement of the line where $D$ is
	identically zero.  This line intersects $\mathcal V$ exactly in the two
	degenerate points, which are contained in $\mathcal V_{(11,0,0)}$
	respectively $\mathcal V_{(0,0,11)}$, but not in the other components.  To
	define the required birational inverses, recall that the projection map
	$\pi$ ``forgets'' the two coefficients $a_{30}$ and $a_{03}$, so that we
	have to recover them from the remaining $a_{ij}$.  From the Plücker
	relations~\eqref{eq:Pluecker:aij} we get
	\[
		a_{30}
		=\frac{a_{20}a_{11}-a_{21}a_{10}}{a_{12}}
		=\frac{a_{20}a_{01}-a_{21}a_{00}}{a_{02}}
	\]
	and from evaluating \eqref{eq:A3} at $z=w=0$ we obtain
	\[
		a_{30}=\frac{a_{00}(4a_{10}a_{02}-3a_{00}a_{12})+a_{01}(a_{00}a_{11}-a_{10}a_{01})}{a_{01}^2}.
	\]
	On the other hand, from the explicit solution in
	Table~\ref{tab:solution} we see that
	\begin{align*}
		\text{on $\mathcal V_{( 1, 1,1)}$:}\quad a_{30}&=\frac{a_{20}}{a_{21}}a_{20}\\
		\text{on $\mathcal V_{(11, 0,1)}$:}\quad a_{30}&=\frac{a_{20}}{a_{21}}a_{20}=\frac{a_{10}}{a_{11}}a_{20}=\frac{a_{00}}{a_{01}}a_{20}\\
		\text{on $\mathcal V_{( 0,11,0)}$:}\quad a_{30}&=\frac{a_{20}}{a_{02}}a_{01}=\frac{4a_{00}a_{20}-a_{10}^2}{a_{01}}.
	\end{align*}
	The coefficient $a_{30}$ is therefore well defined on the complement of
	the common zero locus of all nominators and denominators in the above
	quotients.  On $\mathcal D_{(11,0,1)}$ and $\mathcal D_{(0,11,0)}$ this is
	readily seen to be $\mathcal D_{(1,0,0)}$ respectively $\mathcal
	D_{(2,0,0)}$.  On $\mathcal D_{(1,1,1)}$ the square of the left hand side
	of \eqref{eq:resolution:det} can be expressed as
	\[
		a_{11}^2+8a_{20}a_{02}-4(a_{01}a_{21}+a_{10}a_{12})=0
	\]
	and implies that also $a_{11}=0$.  Therefore $a_{30}$ is well defined on
	the complement of $\mathcal D_{(1,0,0)}$ in $\mathcal D_{(1,1,1)}$.  On
	$\mathcal V_{(1,0,11)}$ we have $a_{30}=0$, which is well defined anyway.
	Similar statements hold for $a_{03}$ by interchanging $a_{ji}$ and
	$a_{ij}$.  This gives a rational left inverse to the projection over each
	component which is regular on the claimed complements.  In this way we get
	four of the six components.

	For the last statement, note that $\mathcal D_{(11,0,0)}$ is the variety
	of quadrics $D(z,w)=a_{20}z^2+a_{10}z+a_{00}$ and that $\mathcal
	V_{(11,0,0)}$ is the subvariety given by $a_{ij}=0$ for $j\not=0$.  This
	accounts for the remaining two components.
\end{proof}

\appendix
\section{Tables}

\begin{table}
	\renewcommand{\arraystretch}{1.25}
	\begin{tabular}{lllll}
		\toprule
		class & polynomials & conditions \\
		\midrule
		$\mathring{\mathcal V}_{(1,1,1)}$
			& $D(z,w)=(a_1z+c_1)(a_2z+b_2w+c_2)(b_3w+c_3)$ & \eqref{eq:det:abc} \\
			& $A_z(w)=a_1a_2(b_3w+c_3)^2/b_3$ & $b_3\not=0$ \\ \multicolumn{1}{c}{$\cup$}
			& $B_w(z)=b_2b_3(a_1z+c_1)^2/a_1$ & $a_1\not=0$ \\[1ex] \cline{2-3} \\[-2.5ex]
		$\mathring{\mathcal V}_{(0,1,0)}$
			& $D(z,w)=a_2z+b_2w +c_2$ \\
			& $A_z(w)=-a_2^2/b_2$ & $b_2\not=0$ \\
			& $B_w(z)=-b_2^2/a_2$ & $a_2\not=0$ \\
		\midrule
		$\mathring{\mathcal V}_{(11,0,1)}$
			& $D(z,w)=(a_1z+c_1)(a_2z+c_2)(b_3w+c_3)$ \\
			& $A_z(w)=a_1a_2(b_3w+c_3)^2/b_3$ & $b_3\not=0$ \\
			& $B_w(z)=0$ \\
		\midrule
		$\mathring{\mathcal V}_{(0,11,0)}$
			& $D(z,w)=(a_1z+b_1w+c_1)(a_3z+b_3w+c_3)$ & \eqref{eq:det:ab} \\
			& $A_z(w)=a_1a_3\bigl(2w+c_3/b_3+c_1/b_1\bigr)$ & $b_1b_3\not=0$ \\
			& $B_w(z)=b_1b_3\bigl(2z+c_3/a_3+c_1/a_1\bigr)$ & $a_1a_3\not=0$ \\
		\midrule
		$\mathcal V_{(11,0,0)}$
			& $D(z,w)=(a_1z+c_1)(a_2z+c_2)$ \\
			& $A_z(w)=2a_1a_2w+a_{30}$ & none \\
			& $B_w(z)=0$ \\
		\bottomrule
	\end{tabular}
	\bigskip
	\caption{%
		Complete solution of the algebraic superintegrability conditions.  The
		ring accents indicate that the corresponding sets are not Zariski
		closed.  We define the class $\mathring{\mathcal V}_{(1,1,1)}$ to
		comprise the class $\mathring{\mathcal V}_{(0,1,0)}$, since it is a
		limiting case.
	}
	\label{tab:solution}
\end{table}

\begin{table}
	\begin{tabular}{cllll}
		\toprule
		class & \multicolumn{1}{c}{$D(z,w)$} & \multicolumn{1}{c}{$A_z(w)$} & \multicolumn{1}{c}{$B_w(z)$} & label \\
		\midrule
		$( 1, 1, 1)$ & $z(z+w)w$ & $w^2$ & $z^2$ & E16 \\
		$(11, 0, 1)$ & $z(z+1)w$ & $w^2$ & $0$ & E19 \\
		$( 1, 0,11)$ & $z(w+1)w$ & $0$ & $z^2$ & E19 \\
		$( 2, 0, 1)$ & $z^2w$ & $w^2$ & $0$ & E17 \\
		$( 1, 0, 2)$ & $zw^2$ & $0$ & $z^2$ & E17 \\
		\midrule
		$( 0,11, 0)$ & $(z+w)(z-w)$ & $2w$ & $-2z$ & E1 \\
		$(11, 0, 0)$ & $z(z+1)$ & $2w$ & $0$ & E7 \\
		$( 0, 0,11)$ & $w(w+1)$ & $0$ & $2z$ & E7 \\
		$( 2, 0, 0)$ & $z^2$ & $2w$ & $0$ & E8 \\
		$( 0, 0, 2)$ & $w^2$ & $0$ & $2z$ & E8 \\
		$( 1, 0, 1)$ & $zw$ & $0$ & $0$ & E20 \\
		\midrule
		$( 0, 1, 0)$ & $z+w$ & $1$ & $1$ & E2 \\
		$( 1, 0, 0)$ & $z$ & $1$ & $0$ & E9 \\
		$( 1, 0, 0)$ & $z$ & $0$ & $0$ & E11 \\
		$( 0, 0, 1)$ & $w$ & $0$ & $1$ & E9 \\
		$( 0, 0, 1)$ & $w$ & $0$ & $0$ & E11 \\
		\midrule
		$( 0, 0, 0)$ & $1$ & $1$ & $0$ & E10 \\
		$( 0, 0, 0)$ & $1$ & $0$ & $1$ & E10 \\
		$( 0, 0, 0)$ & $1$ & $0$ & $0$ & E3 \\
		\bottomrule
	\end{tabular}
	\bigskip
	\caption{%
		Normal forms for solutions of the algebraic superintegrability
		conditions with corresponding labels from
		\cite{Kalnins&Kress&Pogosyan&Miller}.
	}
	\label{tab:normalforms}
\end{table}

\begin{table}
	\centering
	\renewcommand{\arraystretch}{1.5}
	\begin{tabular}{cccc}
		\toprule
		class & \multicolumn{3}{c}{superintegrable potentials} \\
		\midrule
		$( 1, 1, 1)$ & $\frac1{\sqrt{zw}}$ & $\frac1{\sqrt{zw}}\frac1{(\sqrt z+\sqrt w)^2}$ & $\frac1{\sqrt{zw}}\frac1{(\sqrt z-\sqrt w)^2}$ \\
		$(11, 0, 1)$ & $\frac w{\sqrt{(w+1)(w-1)}}$ & $\frac1{\sqrt{z(w+1)}}$ & $\frac1{\sqrt{z(w-1)}}$ \\
		$( 2, 0, 1)$ & $\frac1{\sqrt{zw}}$ & $\frac1{w\sqrt{zw}}$ & $\frac1{w^2}$ \\
		$( 0,11, 0)$ & $zw$ & $\frac1{x^2}$ & $\frac1{y^2}$ \\
		$(11, 0, 0)$ & $zw$ & $\frac w{\sqrt{w^2-1}}$ & $\frac{2zw^2-z}{\sqrt{w^2-1}}$ \\
		$( 2, 0, 0)$ & $zw$ & $\frac1{w^2}$ & $\frac z{w^3}$ \\
		$( 1, 0, 1)$ & $\frac1{\sqrt{zw}}$ & $\frac1{\sqrt z}$ & $\frac1{\sqrt w}$ \\
		$( 0, 1, 0)$ & $x^2+4y^2$ & $\frac1{x^2}$ & $y$ \\
		$( 1, 0, 0)$ & $\frac z{\sqrt w}$ & $\frac1{\sqrt w}$ & $z$ \\
		$( 1, 0, 0)$ & $\frac1{\sqrt w}$ & $x$ & $\frac{z+3w}{\sqrt w}$ \\
		$( 0, 0, 0)$ & $zw$ & $z$ & $w$ \\
		$( 0, 0, 0)$ & $w^3+3zw$ & $w^2+z$ & $w$ \\
		\bottomrule
	\end{tabular}
	\bigskip
	\caption{%
		Superintegrable potentials in the plane (up to conjugation).  For each
		class a basis of non-constant superintegrable potentials is given.
	}
	\label{tab:potentials}
\end{table}

\clearpage
\bibliographystyle{amsalpha}
\bibliography{E2}
\medskip

\end{document}